\def\qed{\hfill $\vcenter{\hrule height .3mm
\hbox {\vrule width .3mm height 2.1mm \kern 2mm \vrule width .3mm
height 2.1mm} \hrule height .3mm}$ \bigskip}
\def \Sph{\mathbb{S}^{n-1}}
\def \RR {\mathbb R}
\def \EE {\mathbb E}
\def \Var {\mathrm{Var}}
\def \PP {\mathbb P}
\def \eps {\varepsilon}
\def \DC {\mathcal{C}_n}
\def \Id {\mathbf{I}_n}
\def \COV {\mathrm{Cov}}
\def \limsup {\mathrm{limsup}}
\def \TTT {\mathcal{T}}
\def \FF {\mathcal{F}}
\newtheorem{theorem}{Theorem}
\newtheorem{lemma}[theorem]{Lemma}
\newtheorem{fact}[theorem]{Fact}
\newtheorem{proposition}[theorem]{Proposition}
\newtheorem{corollary}[theorem]{Corollary}
\theoremstyle{definition}
\newtheorem{definition}[theorem]{Definition}
\theoremstyle{remark}
\newtheorem{remark}[theorem]{Remark}
\long\def\symbolfootnotetext[#1]#2{\begingroup
\def\thefootnote{\fnsymbol{footnote}}\footnotetext[#1]{#2}\endgroup}
\author{Ronen Eldan\thanks{Weizmann Institute of Science. Supported by a European Research Council Starting Grant (ERC StG) and by an Israel Science Foundation grant no. 715/16.} ~and Omer Shamir\thanks{Weizmann Institute of Science. Supported by the European Research Council.}}
\title{Log concavity and concentration of Lipschitz functions on the Boolean hypercube}
\begin{document}
\maketitle

\begin{abstract}
It is well-known that measures whose density is the form $e^{-V}$ where $V$ is a uniformly convex potential on $\RR^n$ attain strong concentration properties. In search of a notion of log-concavity on the discrete hypercube, we consider measures on $\{-1,1\}^n$ whose multi-linear extension $f$ satisfies $\log \nabla^2 f(x) \preceq \beta \Id$, for $\beta \geq 0$, which we refer to as $\beta$-semi-log-concave. We prove that these measures satisfy a nontrivial concentration bound, namely, any Hamming Lipchitz test function $\varphi$ satisfies $\Var_\nu[\varphi] \leq n^{2-C_\beta}$ for $C_\beta>0$. As a corollary, we prove a concentration bound for measures which exhibit the so-called Rayleigh property. Namely, we show that for measures such that under any external field (or exponential tilt), the correlation between any two coordinates is non-positive, Hamming-Lipschitz functions admit nontrivial concentration.
\end{abstract}

\section{Introduction}
In the Euclidean space $\RR^n$, \emph{log-concave} measures, namely measures whose density is of the form $e^{-U}$ with $U$ being a convex potential are known to satisfy several concentration inequalities. For instance, if $\gamma$ is the standard Gaussian measure on $\RR^n$ and the probability measure $\nu$ is absolutely continuous with respect to $\gamma$ with density $d \nu = e^{V} d \gamma$ where the potential $V:\RR^n \to \RR$ satisfies the condition
\begin{equation}\label{eq:Rnsemilogconcave}
\nabla^2 V \preceq (1-\delta) \Id,
\end{equation}
then for every $1$-Lipschitz test function $\varphi$, we will have $\Var_\nu[\varphi] \leq \frac{1}{\delta}$, see e.g., \cite{Saumard14}. In fact, much stronger concentration, for instance in the form of a logarithmic Sobolev inequality, is known to hold in this case.

The objective of this work is to try to generalize the notion of log-concavity to the Boolean hypercube in a way that analogous concentration inequalities are attained. Define $\DC := \{-1,1\}^n$. We say that a function $\varphi:\DC \to \RR$ is $1$-(Hamming)-Lipschitz if
$$
|\varphi(x) - \varphi(y)| \leq \|x-y\|_1, ~~ \forall x,y \in \DC.
$$
Let $\mu$ be the uniform measure on $\DC$. Suppose that $d \nu = e^V d \mu$. Given a $1$-Lipschitz function $\varphi$, it is a trivial fact that $\Var_\nu[\varphi] \leq n^2$, and this bound is sharp in general (e.g., take $\varphi(x) = \sum_i x_i$ and $\nu$ which assigns mass $1/2$ to $(-1,\cdots,-1)$ and $(1,\dots,1)$). We are interested in the question of finding sufficient conditions on the potential $V$, analogous to \eqref{eq:Rnsemilogconcave}, under which a nontrivial bound for $\Var_\nu[\varphi]$ is implied. Let us mention that in the case of the \emph{continuous} hypercube, concentration results of this nature we obtained by Klartag (\cite{Klartag-cube}).

It is clear that any potential $V: \DC \to \RR$ is the restriction of some convex function on $\RR^n$ to $\DC$, meaning that the notion of convexity has to either consider the discrete derivatives of $V$ or to consider the continuous Hessian applied to a suitably chosen interpolation. Our suggested notion of convexity is roughly based on the multi-linear interpolation, but the formal definition will first be given in terms of the logarithmic Laplace transform. Define,
$$
\mathcal{L}[\nu](x) = \log \int_{\DC} e^{\langle x, y \rangle} d \nu(y), ~~ \forall x \in \RR^n.
$$
The function $\mathcal{L}[\nu]$ is known as the log-Laplace transform of the measure $\nu$. We are now ready to define our main notion of \emph{semi-log-concavity}.
\begin{definition} (Semi log-concave measures).
Given a measure $\nu$ on $\DC$, We say that $\nu$ is $\beta$-semi-log-concave if
\begin{equation}\label{eq:semilc}
\nabla^2 \mathcal{L}[\nu](x) \preceq \beta \Id, ~~ \forall x \in \RR^n,
\end{equation}
where the inequality is in the positive-definite sense.
\end{definition}

Our main theorem gives a nontrivial concentration bound for Lipschitz functions with respect to such measures.
\begin{theorem} \label{thm:main}
	Let $\beta \geq 1$. If $\nu$ is a $\beta$-semi-log-concave probability measure on $\DC$ and $\varphi$ is $1$-Lipschitz, then
	$$
	\Var_\nu[\varphi] \leq C \beta n^{2 - c/\beta} 
	$$
	where $C,c>0$ are universal constants.
\end{theorem}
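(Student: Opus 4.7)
My plan is to apply Eldan's stochastic localization, adapted to the Boolean hypercube. Define a random family of measures $(\nu_t)_{t\geq 0}$ on $\DC$ by $d\nu_t(y) = e^{\langle W_t, y\rangle}/Z(W_t)\,d\nu(y)$, where the tilt $W_t \in \RR^n$ solves $dW_t = m_t\,dt + dB_t$ for a standard Brownian motion $B_t$ and $m_t := \int y\,d\nu_t(y)$. This choice makes $\nu_t(A)$ a martingale for each $A \subset \DC$; as $t \to \infty$, $\nu_t$ converges to a Dirac mass at a $\nu$-sample. Crucially, since $\nabla^2 \mathcal{L}[\nu_t](x) = \nabla^2 \mathcal{L}[\nu](W_t + x) \preceq \beta \Id$, the semi-log-concavity is preserved along the flow. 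Applying It\^o's formula to $M_t := \int \varphi\,d\nu_t$ yields $dM_t = \langle c_t, dB_t\rangle$ with $c_t := \COV_{\nu_t}(\varphi, y)$, giving the variance identity
$$
\Var_\nu[\varphi] = \EE\int_0^\infty |c_t|^2\,dt.
$$

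Set $\Sigma_t := \COV_{\nu_t}(y)$ and $V_t := \Var_{\nu_t}[\varphi]$. I will rely on three estimates: (a) Cauchy--Schwarz in the direction of $c_t$ yields $|c_t|^2 \leq \|\Sigma_t\|_{\mathrm{op}}\,V_t \leq \beta V_t$, which is the only place the semi-log-concavity of $\nu_t$ is used; (b) from the symmetrized variance formula $V_t = \tfrac{1}{2}\EE_{y,y'\sim\nu_t}[(\varphi(y)-\varphi(y'))^2]$, together with the Hamming-Lipschitz hypothesis and the computation $\EE_{\nu_t}\|y-y'\|_1 = \mathrm{tr}(\Sigma_t)$, one obtains $V_t \leq n\,\mathrm{tr}(\Sigma_t)$; and (c) the drift of $\mathrm{tr}(\Sigma_t)$ is $-\mathrm{tr}(\Sigma_t^2)$, so $U(t) := \EE[\mathrm{tr}(\Sigma_t)]$ satisfies both $U'(t) \geq -\beta U(t)$ (from semi-log-concavity) and the unconditional $U'(t) \leq -U(t)^2/n$ (from Cauchy--Schwarz on the eigenvalues). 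Together these pin down the decay trajectory of $U(t)$.

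The core of the argument is a careful splitting of $\int_0^\infty \EE|c_t|^2\,dt$ at a well-chosen cut-off $T = \Theta((\log n)/\beta)$. The early portion $[0,T]$ is handled by (a), which drives the differential inequality $V'(t) \geq -\beta V(t)$ on $V(t) := \EE[V_t]$; the late portion $[T,\infty)$ equals $V(T)$ and is controlled via (b) by $n\,U(T)$. The main obstacle is that neither bound on $|c_t|^2$ alone suffices---each gives only the trivial $O(n^2)$ estimate after integration---so the two must be combined in complementary regimes. The precise exponent $2-c/\beta$ emerges from tracking the joint evolution of $V(t)$ and $U(t)$ subject to the mutually constraining inequalities: the semi-log-concavity-imposed localization-slowness $U'(t) \geq -\beta U(t)$ translates, via the Lipschitz bound $V(t) \leq n\,U(t)$ and the logarithmic time window $t \leq \Theta(\log n/\beta)$, into a polynomial variance saving of $n^{-c/\beta}$ over the trivial bound $n^2$.
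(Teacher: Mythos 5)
Your setup is correct and matches the paper's: stochastic localization via exponential tilts, the martingale $M_t=\int\varphi\,d\nu_t$, the variance identity $\Var_\nu[\varphi]=\EE\int_0^\infty|c_t|^2\,dt$, the Cauchy--Schwarz bound $|c_t|^2\le\|\Sigma_t\|_{\mathrm{op}}V_t\le\beta V_t$, the Lipschitz estimate $V_t\le n\,\mathrm{tr}(\Sigma_t)$, and the observation that semi-log-concavity is preserved along the flow. All of that is fine, and indeed the paper establishes (an even stronger, exponential) form of your decay for $U(t)=\EE\,\mathrm{tr}(\Sigma_t)$, namely $U(t)\le n\,e^{-t/8}$.

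However, the claimed synthesis has a genuine gap: the two inequalities you have on $|c_t|^2$ cannot be combined to yield anything beyond the trivial $O(n^2)$ bound when $\beta\ge 1$. Concretely, your constraint (a) gives $V'(t)=-\EE|c_t|^2\ge-\beta V(t)$, hence $V(T)\ge\Var_\nu[\varphi]\,e^{-\beta T}$, a \emph{lower} bound on $V(T)$. Pairing this with the localization tail bound $V(T)\le n\,U(T)\le n^2 e^{-T/8}$ yields only $\Var_\nu[\varphi]\le n^2 e^{(\beta-1/8)T}$, which for $\beta\ge1$ is minimized at $T=0$ and gives $n^2$. Alternatively, integrating $|c_t|^2\le\beta V_t\le\beta n\,\mathrm{tr}(\Sigma_t)$ over all time gives $\Var_\nu[\varphi]\le\beta n\int_0^\infty U(t)\,dt\le 8\beta n^2$, again no better than trivial. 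The decomposition into $\int_0^T+\int_T^\infty$ is circular: the late portion equals $V(T)$ and the early portion equals $\Var_\nu[\varphi]-V(T)$, so without an \emph{independent} bound on $\int_0^T\EE|c_t|^2\,dt$ the split gives nothing. The constraint $U'(t)\ge-\beta U(t)$ is a lower bound on $U$ and only slows the decay, which hurts rather than helps. Your inequalities genuinely pin down no more than the trivial bound.

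What is missing is the paper's central mechanism: a pointwise bound on $|c_t|$ that beats $\sqrt{\beta V_t}$ by a polynomial factor. The paper identifies $|c_t|$ (via a limiting argument) with the directional derivative of the Wasserstein distance $\eps\mapsto\mathrm{W}_1(\tau_{w_t}\nu,\tau_{w_t+\eps\theta}\nu)$, and then proves Proposition \ref{prop:transport}: $\mathrm{W}_1(\tau_v\nu,\tau_{v+\eps\theta}\nu)\le 4\eps\beta\,n^{1-1/(32\beta)}$. This is established by a \emph{reflection coupling} of two copies of the localization SDE started at tilts $v$ and $v+\eps\theta$. Because $a_\nu$ is $\beta$-Lipschitz (i.e.\ $\nabla a_\nu=\nabla^2\mathcal{L}[\nu]\preceq\beta\Id$), the distance process $Y_t=|u_t-w_t|$ satisfies $dY_t=2\,dW_t+Z_t\,dt$ with $Z_t\le\beta Y_t$; a time change plus the reflection principle shows the processes merge by time $1$ with probability $\ge1-\eps\sqrt{\beta}$, and on the small failure event one falls back on $\sqrt{n\,\mathrm{tr}(A_\nu(\cdot))}$, which itself decays exponentially. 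Optimizing the merge/failure tradeoff is what produces the exponent $1-1/(32\beta)$, and squaring and integrating over a logarithmic time window gives $n^{2-c/\beta}$. Your argument never invokes the Lipschitz property of the drift $a_\nu$ in a coupling, which is the one place the hypothesis $\nabla^2\mathcal{L}[\nu]\preceq\beta\Id$ buys a polynomial saving rather than a mere operator-norm bound on the instantaneous covariance.
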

The theorem shows in particular that for any $\beta \in \RR$ there exists $n$ large enough such that $\beta$-semi-log-concave measures on $\DC$ admit nontrivial concentration. \\

Before we proceed, let us give an alternative and slightly stronger notion of semi-log-concavity which could hopefully shed some light and give better intuition regarding the relation to the usual notion of log concavity in $\RR^n$. We first need to recall the \emph{multi-linear} extension of the measure into the continuous hypercube, $[-1,1]^n$. Given a function $f:\DC \to \RR$, it is known that there is a unique function $\hat f:2^{[n]} \to \RR$ such that
\begin{equation}\label{eq:Fourier}
f(x) = \sum_{A \subset [n]} \hat f(A) \prod_{i \in A} x_i.
\end{equation}
The function $\hat f$ is known as the Walsh-Fourier transform of $f$. Observing that the above form makes sense as a function from $[-1,1]^n$ to $\RR$, we refer to this as the multi-linear (or harmonic) extension of $f$. Since the two coincide on $\DC$, below we allow ourselves to use the same notation for both the function and its multi-linear extension. The following fact is attained via a simple calculation, see Section \ref{sec:harmonic} below.
\begin{fact} \label{fact:harmonic}
Suppose that 
\begin{equation}\label{eq:lc2}
\nabla^2 \log \frac{d \nu}{d \mu}(x) \preceq \beta \Id, ~~ \forall x \in [-1,1]^n.
\end{equation}
Then $\nu$ is $(\beta+3)$-semi-log-concave.
\end{fact}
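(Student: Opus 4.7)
The plan is to directly compute the Hessian $\nabla^2 \mathcal{L}[\nu](x)$ using the multilinear structure of $f := d\nu/d\mu$. The first step is to derive the closed-form identity
$$\mathcal{L}[\nu](x) = \log f(\tanh x) + \sum_{i=1}^n \log \cosh(x_i).$$
This follows by expanding $e^{x_i y_i} = \cosh(x_i) + y_i \sinh(x_i)$ for $y_i \in \{-1,+1\}$, using the Walsh--Fourier representation $f(y) = \sum_A \hat f(A) y^A$, and integrating against $\mu$: the pairing collapses to $\int e^{\langle x, y\rangle}\,d\nu(y) = f(\tanh x)\prod_i \cosh(x_i)$, and the identity follows upon taking logarithms.

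The second step is to differentiate twice via the chain rule. Setting $t = \tanh x$, $d_i = 1 - t_i^2$, and $D = \mathrm{diag}(d_i)$, and using $(\tanh)'(s) = 1 - \tanh^2 s$ together with $(\tanh)''(s) = -2 \tanh(s)(1 - \tanh^2 s)$, one obtains
$$\nabla^2 \mathcal{L}[\nu](x) = D\,(\nabla^2 \log f)(t)\,D + D - 2\,\mathrm{diag}\bigl(t_i d_i\, \partial_i \log f(t)\bigr).$$

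The third and main step is to bound the three pieces. From the hypothesis $\nabla^2 \log f \preceq \beta \Id$ and $D \preceq \Id$ we have $D(\nabla^2 \log f) D \preceq \beta D^2 \preceq \beta \Id$, while trivially $D \preceq \Id$. For the diagonal correction, multilinearity of $f$ makes it affine in each $t_i$: $f(t) = \alpha_i(t_{-i})\, t_i + \gamma_i(t_{-i})$ with $\gamma_i > |\alpha_i|$ by strict positivity of $f$ on $\DC$, so $\partial_i \log f(t) = \alpha_i/(\alpha_i t_i + \gamma_i)$, and the combined expression $d_i - 2 t_i d_i\, \partial_i \log f(t) = (1-t_i^2)(1 - r_i t_i)/(1 + r_i t_i)$ with $r_i = \alpha_i/\gamma_i \in (-1,1)$ is amenable to an elementary bound. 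Putting the pieces together, with care to exploit the slack $\beta D^2 = \beta \Id - \beta\,\mathrm{diag}(t_i^2(2-t_i^2))$ near the boundary $|t_i|\to 1$ to absorb the blow-up of the correction as $r_i t_i \to -1$, yields $\nabla^2 \mathcal{L}[\nu](x) \preceq (\beta + 3)\Id$.

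The main obstacle is this last estimate. The quantity $\partial_i \log f(t)$ is not uniformly bounded on $[-1,1]^n$ (it blows up whenever $f$ approaches zero near a vertex), so one cannot bound the correction by majorizing each factor separately. Instead, one must use the linearity of $f$ in each coordinate to see that the particular combination $t_i d_i\, \partial_i \log f(t)$, together with the $D$ term and the slack in $\beta D^2$, still produces a uniform $O(1)$ bound. This cancellation is the content of the ``simple calculation'' referenced in the text.
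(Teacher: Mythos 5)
Your proof follows essentially the same route as the paper's: derive the identity $\mathcal{L}[\nu](x) = \log\rho(\tanh x) + \sum_i\log\cosh(x_i)$ (the paper writes this as $g(x)=\log\rho(\tanh x)=-\sum_i\log\cosh(x_i)+\mathcal{L}[\nu](x)$), differentiate twice by the chain rule to get the same three-term decomposition, bound the matrix term by $\beta\Id$ using $D\preceq\Id$, and control the diagonal correction via the bounds on $\partial_i\log\rho$ coming from multilinearity (your $f(t)=\alpha_i t_i+\gamma_i$ parametrization is exactly the source of the paper's $\frac{1}{y_i-1}\le\nabla_i\log\rho(y)\le\frac{1}{y_i+1}$). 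If anything you are more careful than the paper's terse final step: the paper only records $|\nabla_i\log\rho(\tanh x)/\cosh^2 x_i|\le 2$, which on its own gives a diagonal correction of at most $5$, whereas your observation that one must combine the signed bound (giving $(1+|t_i|)^2\le 4$) with the slack $\beta(\Id-D^2)$ is what actually produces $\beta+3$ — with the implicit caveat, shared by the paper, that this last absorption step uses $\beta\ge 1$.
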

Thus, a corollary to Theorem \ref{thm:main} is that the condition \eqref{eq:lc2} implies that every $1$-Lipschitz function $\varphi$ satisfies $\Var_\nu[\varphi] \leq C(\beta+3)n^{2 - c/(\beta+3)}$.
\medskip
\begin{remark}
Let $f$ be the multi-linear extension of $\frac{d \nu}{d \mu}$. Since $f$ is harmonic and since $\nabla \log f = \frac{\nabla^2 f}{f} - \frac{\nabla f^{\otimes 2}}{f^2}$, we see that in order for $f$ to be log-concave, the Hessian of $f$ can have at most one positive eigenvalue, due to which the family of log-concave functions, or in other words functions satisfying condition \eqref{eq:lc2} with $\beta = 0$, is rather restricted. Semi-log-concave measures are a much richer family, as demonstrated below. 
\end{remark}

\subsection{A lower bound on the entropy}
Our second result addresses the question of finding conditions under which the entropy of the measure $\nu$ is close to that of the corresponding product measure, having the same marginals as $\nu$. This result is inspired by a corresponding bound due to Anari, Oveis-Gharan and Vinzant \cite[Theorem 5.2]{AOV1}, see the discussion below.

Given a measure $\nu$ on $\DC$, we define,
$$
\mathcal{H}(\nu) := \int_{\DC} \log \frac{1}{\nu(\{y\})} d\nu(y),
$$
the entropy of $\nu$. Moreover, for all $i$ let $\pi_i(\nu)$ be the marginal of $\nu$ onto the $i$-th coordinate, and define
$$
\tilde {\mathcal{H}}(\nu) := \sum_{i \in [n]} \mathcal{H}(\pi_i(\nu)).
$$
A well-known fact is that $\mathcal{H}(\nu) \leq \tilde {\mathcal{H}}(\nu)$. We show that under a log-concavity-type condition, this inequality can be reversed. 
\begin{theorem} \label{thm:entropy}
Let $\nu$ be a probability measure on $\DC$. Suppose that, for some $\beta \geq 1$, $\nu$ satisfies the condition
\begin{equation}\label{eq:semilc2}
\nabla^2 \mathcal{L}[\nu](x) \preceq \beta \mathrm{diag}\left ( \nabla^2 \mathcal{L}[\nu](x) \right) , ~~ \forall x \in \RR^n.
\end{equation}
Then one has
$$
\tilde {\mathcal{H}}(\nu) \leq \beta \mathcal{H}(\nu).
$$
\end{theorem}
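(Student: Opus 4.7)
The plan is to prove the inequality via stochastic localization on the hypercube, by comparing the rates at which $\EE[\mathcal{H}(\nu_t)]$ and $\EE[\tilde{\mathcal{H}}(\nu_t)]$ decay along a natural measure-valued martingale. Because $\|y\|^2 = n$ is constant on $\DC$, localization collapses to pure exponential tilting: let $W_t$ be a standard Brownian motion on $\RR^n$ and construct an Itô process $X_t$ such that
$$ \frac{d\nu_t}{d\nu}(y) = \frac{e^{\langle X_t,y\rangle}}{Z_t}, \qquad m_t := \EE_{\nu_t}[Y] = \nabla\mathcal{L}[\nu](X_t), $$
and $\nu_t$ is the positive martingale satisfying $d\nu_t(y) = \nu_t(y)\langle y-m_t,\,dW_t\rangle$. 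Standard arguments show $\nu_t$ converges a.s.\ to a Dirac mass on $\DC$ as $t \to \infty$, so both $\mathcal{H}(\nu_t)$ and $\tilde{\mathcal{H}}(\nu_t)$ vanish in expectation at infinity.

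Write $H(x) := \nabla^2\mathcal{L}[\nu](x)$, so $H(X_t) = \COV_{\nu_t}(Y)$ with diagonal entries $H_{ii}(X_t) = 1-(m_t^{(i)})^2$. A direct Itô calculation applied to $-\sum_y \nu_t(y)\log\nu_t(y)$ gives
$$ \frac{d}{dt}\EE[\mathcal{H}(\nu_t)] = -\tfrac{1}{2}\EE\bigl[\mathrm{tr}\,H(X_t)\bigr]. $$
For the marginal contributions, I would write $\mathcal{H}(\pi_i\nu_t) = g(m_t^{(i)})$ with $g(m) := -\tfrac{1+m}{2}\log\tfrac{1+m}{2} - \tfrac{1-m}{2}\log\tfrac{1-m}{2}$, compute $g''(m) = -1/(1-m^2)$, and observe that $dm_t^{(i)} = \sum_j H_{ij}(X_t)(dW_t)_j$, hence $d\langle m^{(i)}\rangle_t = (H^2)_{ii}(X_t)\,dt$. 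Itô's formula then yields
$$ \frac{d}{dt}\EE[\tilde{\mathcal{H}}(\nu_t)] = -\tfrac{1}{2}\EE\!\left[\,\sum_i \frac{(H^2)_{ii}(X_t)}{H_{ii}(X_t)}\right]. $$

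The key step is a linear-algebraic comparison under the hypothesis $H \preceq \beta\,\mathrm{diag}(H)$. Writing $H = D^{1/2} A D^{1/2}$ with $D := \mathrm{diag}(H)$, the matrix $A$ is PSD (as $H$ is a covariance) with $A_{ii} = 1$, and the hypothesis becomes $A \preceq \beta\,\Id$; combined with $A \succeq 0$ this gives $A^2 \preceq \beta A$. Then
$$ \sum_i \frac{(H^2)_{ii}}{H_{ii}} = \sum_{i,j} D_{jj} A_{ij}^2 = \sum_j D_{jj}(A^2)_{jj} \leq \beta \sum_j D_{jj} A_{jj} = \beta\,\mathrm{tr}(H), $$
so $\frac{d}{dt}\EE[\tilde{\mathcal{H}}(\nu_t)] \geq \beta\,\frac{d}{dt}\EE[\mathcal{H}(\nu_t)]$ for all $t \geq 0$. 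Integrating over $[0,\infty)$ and using that both expected entropies vanish at infinity gives $-\tilde{\mathcal{H}}(\nu) \geq -\beta\,\mathcal{H}(\nu)$, as desired.

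The main technical obstacle is handling $g''(m) \to -\infty$ as some $m_t^{(i)} \to \pm 1$, and justifying the interchange of expectation with the time integral up to $\infty$. This should be handled by stopping at $\tau_\varepsilon := \inf\{t : \max_i |m_t^{(i)}| \geq 1-\varepsilon\}$, running the argument on $[0,\tau_\varepsilon]$, and passing to the limit via dominated convergence, which is valid since $0 \leq \mathcal{H}(\nu_t),\,\tilde{\mathcal{H}}(\nu_t) \leq n\log 2$ uniformly. The Itô computations themselves are routine once the SDE $d\nu_t(y) = \nu_t(y)\langle y-m_t,dW_t\rangle$ is in hand.
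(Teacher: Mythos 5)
Your proposal is correct and follows essentially the same route as the paper: stochastic localization via exponential tilting, the identity $\mathcal{H}(\nu) = \tfrac{1}{2}\EE\int_0^\infty \mathrm{Tr}(\COV(\nu_t))\,dt$ (which the paper cites from Eldan's earlier work and you rederive directly), the It\^o computation of $d\,h(a_t)$ giving the drift $-\tfrac12\sum_i (H^2)_{ii}/H_{ii}$, and the linear-algebraic inequality $\sum_i (H^2)_{ii}/H_{ii} \le \beta\,\mathrm{Tr}(H)$ from $H \preceq \beta\,\mathrm{diag}(H)$. Your explicit normalization $H = D^{1/2}AD^{1/2}$ with $A^2 \preceq \beta A$ is a slightly more spelled-out form of the same step, and your remark about stopping before the boundary to control $g''$ is a technical point the paper silently elides; otherwise the two proofs coincide.
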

\begin{remark}
Condition \eqref{eq:semilc2} is stronger than condition \eqref{eq:semilc}. Indeed, it is not hard to check that $\mathrm{diag}\left ( \nabla^2 \mathcal{L}[\nu](x) \right) \preceq \Id$ (see formula \eqref{eq:momentgenerating} below).
\end{remark}

\subsection{An application: Concentration of negatively dependent random variables}
For a sequence of Bernoulli variables $X_1,...,X_n$, there are several notions of \emph{negative-dependence} between those variables (see e.g., \cite{BSS82,BK85,BF87,DR98}), some of which are known or conjectured to imply concentration of Lipschitz functions. Some notable first steps towards a theory unifying those notions appear in the work of Permantle \cite{pemantle04}, to which we also refer for a review of these notions.

The simplest notion of negative dependence is \textbf{pairwise negative-correlations}, hence the condition that $\EE[X_i X_j] \leq \EE X_i \EE X_j$ for all $i \neq j$. This condition, however, is too weak to imply any nontrivial concentration bounds. For example if $(X_1,...,X_n)$ are distributed as a uniformly chosen row of the $n \times n$ Hadamard matrix, then these variables are pairwise independent, and therefore have nonpositive correlations. However, if $A$ is the subset of combinations given by the first $n/2$ rows and $\varphi(x)$ is the Hamming distance of $x$ to the set $A$, then $\Var[ \varphi(X_1,\dots,X_n)] = \Omega \left (n^2 \right )$. 

A stronger notion which appears in the literature is \textbf{negative-association}: We say that $X_1,...,X_n$ are negatively associated if for all $I,J \subset [n]$ with $I \cap J = \emptyset$ and every monotone functions $f:\{0,1\}^I \to \RR$ and $g:\{0,1\}^J \to \RR$ one has $\EE [f(X_I) g(X_J)] \leq \EE f(X_I) \EE g(X_J)$. It was conjectured by E. Mossel that Lipschitz functions admit sub-Gaussian concentration with respect to such measures. 

To the best of our knowledge, there are two results in this direction in the literature: It was shown by Peres and Pemantle that sub-Gaussian concentration of Lipschitz functions hold for measures satisfying the \textbf{strong-Rayleigh} property (\cite{Peres-Pemantle}), which amounts to stability of the generating polynomial of the measure. More recently, Garbe and Vondrak (\cite{Garbe-Vondrak}) showed that concentration is implied by the \textbf{negative-regression} property. We also refer to their paper for a discussion of related bounds and questions. \\

Here, we consider the following notion of negative dependence suggested by Wagner \cite{Wa08}.
\begin{definition} (Rayleigh measures). 
We say that $X_1,\dots,X_n$ satisfy the \textbf{Rayleigh} property if for every $\theta \in \RR^n$ and for all $i,j \in [n]$, we have
\begin{equation}\label{eq:TNC}
\EE \left [X_i X_j e^{ \sum_i \theta_i X_i} \right ] \EE \left [ e^{ \sum_i \theta_i X_i} \right ] \leq \EE\left [X_i e^{ \sum_i \theta_i X_i} \right ] \EE\left [ X_j e^{ \sum_i \theta_i X_i} \right ]. 
\end{equation}
\end{definition} 

In other words, $X_1,...,X_n$ satisfies the Rayleigh property if the correlations between all pairs are negative even after reweighing the measure by an exponential tilt. Equivalently, this is the largest family that exhibits pairwise negative correlations and is closed under the operation of applying a magnetic field.\\

A corollary of our main theorem is the following concentration bound for measures with the Rayleigh property as well a lower bound for the entropy.
\begin{corollary} \label{cor:TNC}
If $X_1,...,X_n$ satisfy the Rayleigh property, then,
\begin{enumerate}
\item 
For any $1$-Hamming-Lipschitz function $\varphi$, we have 
$$
\Var[\varphi(X_1,\dots,X_n)] \leq C n^{2-c},
$$ 
for universal constants $C,c>0$.
\item 
One has,
$$
\sum_{i \in [n]} \mathcal{H}(X_i) \leq 2 \mathcal{H}(X_1,...,X_n).
$$	
\end{enumerate}
\end{corollary}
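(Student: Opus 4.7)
The plan is to reduce both parts of the corollary to Theorems \ref{thm:main} and \ref{thm:entropy} by showing that the Rayleigh property is equivalent to a pointwise condition on $\nabla^2 \mathcal{L}[\nu]$ which is strong enough to imply both \eqref{eq:semilc} and \eqref{eq:semilc2} with the universal constant $\beta = 2$. The starting observation is that $\nabla^2 \mathcal{L}[\nu](x)$ is precisely the covariance matrix of $X$ under the exponentially tilted measure $d\nu^x \propto e^{\langle x, y\rangle}\, d\nu(y)$; in particular its $(i,j)$ entry is $\COV_{\nu^x}(X_i, X_j)$. Rewriting \eqref{eq:TNC} as $\COV_{\nu^x}(X_i, X_j) \leq 0$ for all $i \neq j$, the Rayleigh property says exactly that, at every $x \in \RR^n$, the matrix $M := \nabla^2 \mathcal{L}[\nu](x)$ is PSD, has non-positive off-diagonal entries, and has diagonal entries bounded by $1$ (since $X_i \in \{-1,1\}$).

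The key linear-algebraic input is the following: every symmetric PSD matrix $M$ with non-positive off-diagonals satisfies $M \preceq 2\,\mathrm{diag}(M)$. To see this, set $D = \mathrm{diag}(M)$ and $\tilde M = D^{-1/2} M D^{-1/2}$ (restricting, if needed, to the coordinates with positive variance), so that $\tilde M$ is PSD with ones on the diagonal and non-positive off-diagonals. For any $w \in \RR^n$, decompose $w = w^+ - w^-$ into its coordinate-wise positive and negative parts and expand
\[ w^T \tilde M w = (w^+)^T \tilde M w^+ + (w^-)^T \tilde M w^- - 2(w^+)^T \tilde M w^-. \]
For any non-negative vector $u$, the non-positivity of the off-diagonals gives $u^T \tilde M u \leq \sum_i u_i^2 \tilde M_{ii} = \|u\|^2$; and by Cauchy--Schwarz for the PSD form $\tilde M$, $|(w^+)^T \tilde M w^-| \leq \sqrt{(w^+)^T \tilde M w^+}\sqrt{(w^-)^T \tilde M w^-} \leq \|w^+\|\|w^-\|$. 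Combining, $w^T \tilde M w \leq (\|w^+\| + \|w^-\|)^2 \leq 2\|w\|^2$, i.e.\ $\tilde M \preceq 2\Id$, or equivalently $M \preceq 2D$. Since also $D \preceq \Id$ under the Rayleigh property, this yields $M \preceq 2 \Id$ at every tilt.

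With these two inequalities in hand, Theorem \ref{thm:main} applied with $\beta = 2$ gives $\Var[\varphi] \leq Cn^{2 - c/2}$, proving part (1), while Theorem \ref{thm:entropy} applied with $\beta = 2$ gives $\tilde{\mathcal H}(\nu) \leq 2\mathcal H(\nu)$, proving part (2). The only non-routine step is the splitting inequality $w^T \tilde M w \leq (\|w^+\| + \|w^-\|)^2$; there is no substantial obstacle otherwise, as the main content of the corollary is the recognition that Rayleigh is precisely a statement about the sign pattern of $\nabla^2 \mathcal{L}[\nu]$ at every exponential tilt, after which the two theorems do the heavy lifting.
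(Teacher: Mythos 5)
Your proposal is correct and takes essentially the same approach as the paper: identify $\nabla^2\mathcal{L}[\nu](x)$ as the tilted covariance, observe that Rayleigh is exactly the assertion that this matrix has non-positive off-diagonal at every $x$, and then show by a positive/negative coordinate split that any PSD matrix with non-positive off-diagonals and diagonal at most $1$ satisfies $M\preceq 2\,\mathrm{diag}(M)\preceq 2\Id$, so both theorems apply with $\beta=2$. The only difference is cosmetic: the paper absorbs the cross term $\langle u_+, M u_-\rangle$ via the elementary PSD inequality $\langle u,Mu\rangle\le 2\langle u_+,Mu_+\rangle+2\langle u_-,Mu_-\rangle$ and then kills the off-diagonal contribution inside each same-sign block, whereas you normalize to $\tilde M=D^{-1/2}MD^{-1/2}$, expand the quadratic form explicitly, and control the cross term by Cauchy--Schwarz before re-assembling; both routes deliver the constant $2$ and are equally rigorous.
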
 

\begin{proof}[Proof of corollary \ref{cor:TNC}]
Let $\nu$ be the law of $(2 X_1 - 1,\dots, 2 X_n - 1)$. Observe that the condition \eqref{eq:TNC} is equivalent to 
$$
\partial_i \partial_j \mathcal{L}[\nu](\theta) \leq 0,\quad\quad\forall{i\neq j}.
$$
Therefore, Rayleigh property is equivalent to the fact that $\nabla^2 \mathcal{L}[\nu](x)$ has non-positive off-diagonal entries for all $x \in \RR^n$. Let $u \in \RR^n$. Define $u = u_+ + u_-$ where $u_+ \in \RR_+^n$ and $u_- \in \RR_-^n$. Recall that $\nabla^2 \mathcal{L}[\nu](x)$ is positive semi-definite and that $\left (\nabla^2 \mathcal{L}[\nu](x) \right )_{i,i} \leq 1$ for all $i$ (see the identity \eqref{eq:momentgenerating} below), so by convexity,
\begin{align*}
\left \langle u, \nabla^2 \mathcal{L}[\nu](x) u \right \rangle & \leq 2 \left \langle u_+, \nabla^2 \mathcal{L}[\nu](x) u_+ \right \rangle + 2 \left \langle u_-, \nabla^2 \mathcal{L}[\nu](x) u_- \right \rangle \\
& \leq 2 \langle u_+, \mathrm{diag}\left ( \nabla^2 \mathcal{L}[\nu](x) \right ) u_+ \rangle + 2 \langle u_-, \mathrm{diag}\left ( \nabla^2 \mathcal{L}[\nu](x) \right ) u_- \rangle \\
& = 2 \langle u, \mathrm{diag}\left ( \nabla^2 \mathcal{L}[\nu](x) \right ) u \rangle.
\end{align*}
Thus, $\nu$ satisfies both \eqref{eq:semilc} and \eqref{eq:semilc2} with $\beta = 2$. An application of Theorem \ref{thm:main} implies the first part, and an application of Theorem \ref{thm:entropy}, the second.
\end{proof}

It was pointed out to us by P. Nuti and J. Vondrak, that in the special case that the measure $\nu$ is homogeneous (namely when $\sum_i X_i$ is deterministic), the Rayleigh property implies both the so-called stochastic covering property (see \cite{Peres-Pemantle}) and the negative regression property (\cite{Garbe-Vondrak}), which in turn (using either of the above references) implies a stronger version the above corollary (which gives sub-Gaussian concentration). In fact, the more recent paper \cite{anari2020spectral} gives spectral gap in this case (see discussion below). However, all of the above seem to rely on homogeneouity in a crucial way.


\subsection{Relation to the works of Anari, Liu, Oveis-Gharan and Vinzant}
A seemingly related notion of log-concavity of measures on the discrete hypercube was given in a series of works by Anari, Liu, Oveis-Gharan and Vinzant in \cite{AOV2,AOV1}. Given $\{-1,1\}$-Bernoulli random variables $X_1,\dots,X_n$ distributed according to a law $\nu$, which can be identified with a random subset $A \subset [n]$ by $X_i = 2 \mathbf{1}_{i \in A} - 1$, they consider the generating polynomial
$$
p_\nu(z_1,...,z_n) = \EE \prod_{i \in A} z_i.
$$
They show that if $p$ is both log-concave on the positive orthant and homogeneous (which is equivalent to the fact $\sum_i X_i$ is supported on one point), then the law of $X_1,...X_n$ admits, among other things, strong concentration properties in the form of a spectral gap (with respect to the Glauber dynamics). 

It is not hard to check that the log-concavity of the polynomial $p_\nu$ is equivalent to the condition
\begin{equation}\label{eq:condaov}
\nabla^2 \mathcal{L}[\nu] (x) \preceq 2 \left (\mathrm{diag} \left (\nabla \mathcal{L}[\nu] (x) \right ) + \Id \right ), ~~ \forall x \in \RR^n.
\end{equation}
Since $\nabla \mathcal{L}[\nu] (x) \in [-1,1]^n$, the above condition is strictly stronger our semi-log-concavity condition \eqref{eq:semilc} with $\beta=4$. 

On a first glance it may seem that Theorem \ref{thm:main} is effectively similar to \cite[Theorem 1.1]{AOV2} (and could perhaps follow from the same methods), however we believe that this is not the case, and the resemblance between the two results is mainly on a superficial level. A crucial difference between the results is that our notion of log-concavity is invariant under reflections about the coordinate axes, whereas in the latter notion, the direction $(1,...,1)$ has a special role. In cases of interest, such as homogeneous distributions where $|A| \ll n$, condition \eqref{eq:condaov} is actually closer to strict log-concavity. 

Since the multi-linear polynomial $p_\nu$ is harmonic, its log-concavity implies that the Hessian matrix can have only one non-negative eigenvalue. In this sense, condition \eqref{eq:condaov} is much more rigid than our condition. Respectively, while our proof is based mainly on analytic methods, the proof in \cite{AOV2} has a more algebraic flavor (and is also based the theory of high-dimensional expanders). We do not know if there is a deeper connection between the results, but it doesn't seem that any of the two follows from the other. 

On the other hand, Theorem \ref{thm:entropy} seems rather closely related to \cite[Theorem 5.2]{AOV1}, and the former can be thought of as a soft and modified version of the latter: Indeed, condition \eqref{eq:semilc2}, compared to \eqref{eq:condaov} is invariant under coordinate reflections and softer in the sense that $\beta$ can be larger than $1$, but otherwise rather similar. The proof of the latter is simpler and basically reduces to an application of Jensen's inequality, whereas our proof (a small variation thereof also implies the latter bound) is slightly more complicated and uses stochastic calculus; we do not know if it can be attained by more elementary techniques.

Finally, in relation to Corollary \ref{cor:TNC}, a result of a similar spirit appears in \cite{anari2020spectral}. A corollary of the main theorem there shows that if $\nu$ is $d$-homogeneous and all measures obtainable from $\nu$ by conditioning are pairwise-negatively correlated, then it has a spectral gap polynomial in $n$. It seems however, that the assumption of homogeneouity is crucial in this case. 

\subsubsection*{Acknowledgements}
We'd like to thank Bo'az Klartag for a fruitful discussion, as well as Nima Anari and Jan Vondrak and Pranav Nuti for some enlightening comments on a preliminary version of this manuscript.

\section{Preliminaries and a stochastic construction}
Throughout this section, we fix a probability measure $\nu$ on $\DC$ and a Lipschitz test function $\varphi:\DC \to \RR$.
\subsection{Some preliminary definitions}
For a vector $w \in \RR^n$, define the \emph{tilt} of the measure $\nu$ as
$$
\frac{d \tau_{w} \nu(x)}{d \nu(x)} := Z_\nu(w)^{-1}  e^{\langle w, x \rangle} 
$$
where
$$
Z_\nu(w) := \int_{\DC} e^{\langle w, x \rangle} d \nu.
$$
Also define the functions
$$
a_\nu(w) := \int_{\DC} x d \tau_w \nu(x), ~~ A_\nu(w) := \int_{\DC} \left (x - a_\nu(w) \right )^{\otimes 2} d \tau_w \nu(x) = \COV(\tau_w \nu).
$$
A well-known calculation gives,
\begin{equation}\label{eq:momentgenerating}
a_\nu(w) = \nabla \mathcal{L} [\nu] (w), ~~ A_\nu(w) = \nabla^2 \mathcal{L} [\nu] (w)
\end{equation}
(this is the fact that the Log-Laplace transform is the cumulant-generating function). Thus, if $\nu$ is $\beta$-log-concave, we have
\begin{equation}\label{eq:nablaa}
\nabla a_\nu(w) = \nabla^2 \mathcal{L}[\nu] (w) \preceq \beta \Id.
\end{equation}
\subsection{Stochastic localization} \label{sec:stochastic}
We construct a stochastic process driven by a Brownian motion, which we refer to as \emph{stochastic localization}. A somewhat similar process was originally used in \cite{Eldan-thin-KLS} to establish concentration properties for log-concave measures on $\RR^n$. Here we use a discrete version, similar to the construction which appears in \cite{Eldan-taming, eldan2020spectral}. In this section we occasionally allow ourselves to omit some of the details of the proofs, and the reader is referred to \cite{Eldan-taming} for more rigorous derivations. \\

Let $B_t$ be a standard Brownian motion on $\RR^n$ adapted to a filtration $\FF_t$. Consider the system of equations,
\begin{equation}\label{eq:localization}
F_0(x) = 0, ~~ d F_t (x) = F_t(x) \langle x - a_t,  d B_t \rangle, ~~ \forall x \in \DC,
\end{equation}
where $a_t := \int x d \nu_t(x) := \int x F_t(x) d \nu(x)$. \\

We think of this process $(\nu_t)_t$ as an evolution of measures on $\DC$, which starts with the measure $\nu_0 = \nu$, and as seen below, ends up with a Dirac measure whose support is $\nu$-distributed. Let us first summarize some useful properties of this process.
\begin{proposition} \label{prop:basicprops}
The process defined above satisfies the following properties.
\begin{enumerate}
\item 
Almost surely, for all $t$, $\nu_t$ is a probability measure.
\item
For all $A \subset \DC$, the process $\nu_t(A)$ is a martingale.
\item 
The process $a_t$ almost surely converges to a point in $\DC$, and $a_\infty := \lim_{t \to \infty} a_t$ is distributed according to the law $\nu$. Moreover, the measure $\nu_t$ almost-surely weakly converges to a Dirac measure at $a_\infty$.
\end{enumerate}
\end{proposition}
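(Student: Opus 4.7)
The plan is to verify the three assertions by standard Itô calculus, treating \eqref{eq:localization} as a coupled system of $2^n$ scalar SDEs on the finite set $\DC$ driven by the common Brownian motion $B_t$ (reading the initial condition so that $F_0\equiv 1$, hence $\nu_0=\nu$).

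For property (1), I would first rewrite \eqref{eq:localization} as a Doléans--Dade exponential,
$$F_t(x)=\exp\!\Bigl(\int_0^t\langle x-a_s,dB_s\rangle-\tfrac12\int_0^t|x-a_s|^2\,ds\Bigr),$$
making positivity transparent. Total mass preservation is then self-consistent: setting $M_t:=\int F_t(y)\,d\nu(y)$ and $a_t:=M_t^{-1}\int yF_t(y)\,d\nu(y)$, a direct calculation yields $dM_t=\bigl\langle\int(y-a_t)F_t(y)\,d\nu(y),\,dB_t\bigr\rangle=0$ by the very definition of $a_t$, hence $M_t\equiv1$ and $a_t$ coincides with the barycenter of $\nu_t$. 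Existence and uniqueness of the solution on the whole of $[0,\infty)$ follow because the coefficients are locally Lipschitz on the region $\{M_t>0\}$, which the exponential formula shows is preserved. Property (2) is then immediate: $d\nu_t(A)=\bigl\langle\int_A(x-a_t)F_t(x)\,d\nu(x),\,dB_t\bigr\rangle$ is a driftless Itô integral with a bounded integrand (since $\nu_t(A)\in[0,1]$), hence a genuine martingale.

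For property (3), the key step is to derive the SDE for the barycenter. From $a_t=\int xF_t(x)\,d\nu(x)$ and \eqref{eq:localization}, one finds $da_t=A_t\,dB_t$ with $A_t:=\COV(\nu_t)$, so Itô gives $d|a_t|^2=2\langle a_t,A_t\,dB_t\rangle+\operatorname{tr}(A_t^2)\,dt$. Because $\nu_t$ is supported on $\DC$, the identity $\int|x|^2\,d\nu_t(x)=n$ forces $\operatorname{tr}(A_t)=n-|a_t|^2\in[0,n]$, and taking expectations yields
$$\EE\int_0^\infty\operatorname{tr}(A_s^2)\,ds\;=\;\EE|a_\infty|^2-|a_0|^2\;\le\;n.$$
Martingale convergence applied to the bounded martingale $a_t$ gives $a_t\to a_\infty$ almost surely and in $L^2$. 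To upgrade this to $A_t\to 0$ a.s., I would combine the monotonicity $\EE\operatorname{tr}(A_t)=n-\EE|a_t|^2\downarrow$ with the finite-dimensional inequality $\operatorname{tr}(A)^2\le n\operatorname{tr}(A^2)$: the integrability bound forces $\operatorname{tr}(A_t)\to 0$ in expectation, so $\EE|a_\infty|^2=n$, i.e. $a_\infty\in\DC$ a.s.; pathwise this forces $\nu_t$ to weakly converge to $\delta_{a_\infty}$. Finally, each bounded martingale $\nu_t(\{x\})$ converges a.s.\ to $\mathbf{1}_{a_\infty=x}$, so $\PP(a_\infty=x)=\EE[\nu_0(\{x\})]=\nu(\{x\})$.

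The main obstacle is the quantitative localization step in (3): passing from the $L^1$-in-time integrability of $\operatorname{tr}(A_s^2)$ to genuine almost-sure vanishing of $A_t$, and hence to a.s.\ collapse of $\nu_t$ onto a point of $\DC$. Once this is in hand, parts (1) and (2) are clean manipulations at the level of the SDE, and the distributional identification of $a_\infty$ is a routine application of bounded martingale convergence.
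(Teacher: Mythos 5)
Your proof is correct and follows essentially the same strategy as the paper's: derive $da_t=\COV(\nu_t)\,dB_t$, use the key identity $\mathrm{diag}(\COV(\nu_t))=\Id-\mathrm{diag}(a_t)^2$ to show that the bounded martingale $a_t$ cannot stabilize away from $\DC$, then identify the law of $a_\infty$ via the martingale property of $\nu_t(A)$. You also correctly read $F_0\equiv 1$ (the paper's ``$F_0(x)=0$'' is a typo), and you give a more explicit treatment of existence/positivity via the Dol\'eans--Dade exponential than the paper does.

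The one place where your route diverges is the ``collapse to a corner'' step. The paper argues pathwise and coordinate-by-coordinate: since $\langle a_\cdot,e_i\rangle$ is a bounded continuous martingale, $[\langle a_\cdot,e_i\rangle]_\infty<\infty$ a.s., and $d[\langle a_\cdot,e_i\rangle]_t\ge(1-\langle a_t,e_i\rangle^2)^2\,dt$ then immediately forces $\langle a_\infty,e_i\rangle\in\{\pm1\}$. You instead work in expectation with $\mathrm{tr}(A_t)$, using $\EE\int_0^\infty\mathrm{tr}(A_s^2)\,ds\le n$, monotonicity of $\EE\,\mathrm{tr}(A_t)$, and the matrix Cauchy--Schwarz bound $\mathrm{tr}(A)^2\le n\,\mathrm{tr}(A^2)$ to get $\EE|a_\infty|^2=n$ and hence $a_\infty\in\DC$ a.s. This works, but it takes a small detour through Jensen/Cauchy--Schwarz that the pathwise coordinate argument avoids; in fact your own bound $\int_0^\infty\mathrm{tr}(A_s^2)\,ds<\infty$ a.s.\ combined with $\mathrm{tr}(A_s^2)\ge\frac1n(n-|a_s|^2)^2$ would let you conclude pathwise directly, matching the paper more closely.
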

\begin{proof}
We have
$$
d \nu_t(\DC) = d \int_{\DC} F_t(x) \nu(x) = \int_{\DC} (x - a_t) d \nu_t(x) d B_t = 0,
$$
which proves the first part. The second part is evident from the definition. For the third part, a calculation gives,
\begin{align}
da_t ~& = d \int_{\DC} x \nu_t(x)  \nonumber \\
& = \left (\int_{\DC} x \otimes (x - a_t)\nu_t(dx) \right ) dB_t \nonumber \\
& = \left (\int_{\RR^d} (x - a_t)^{\otimes 2} \nu_t(dx) \right ) dB_t \nonumber \\ 
& = \COV(\nu_t) dB_t. \label{eq:dat}
\end{align}
Therefore, $a_t$ is a martingale, and 
\begin{equation}\label{eq:atfast}
d [\langle a_t, e_i \rangle]_t = \sum_{j \in [n]} \COV(\nu_t)_{i,j}^2 \geq \COV(\nu_t)_{i,i}^2 = (1 - \langle a_t, e_i \rangle^2)^2 dt,
\end{equation}
implying that $\langle a_t, e_i \rangle$ converges to $\{\pm 1\}$ almost surely and that $\nu_t$ converges weakly to to $\delta_{a_\infty}$, which also implies that
$$
\lim_{t \to \infty} a_t \in A \Leftrightarrow \lim_{t \to \infty} \nu_t(A) = 1,
$$
for all $A \subset \DC$. Since $\nu_t(A)$ is a martingale, we have that 
$$
\PP \left ( \lim_{t \to \infty} a_t \in A \right ) = \lim_{t \to \infty} \EE \nu_t(A) = \nu(A),
$$
implying the third part.
\end{proof}

Next, we have by It\^o's formula, for all $x \in \DC$,
\begin{align}
d \log F_t(x) & = \frac{d F_t(x)}{F_t(x)} - \frac{d[F(x)]_t}{2 F_t(x)^2} \nonumber \\
& = \langle x-a_t, d B_t \rangle - \frac{1}{2}  |x-a_t|^2 dt \nonumber \\
& = \langle x, d B_t + a_t dt \rangle + d Z_t
\end{align}
where $Z_t$ is an It\^o process that does not depend on $x$ (here we used the fact that $|x|^2$ is constant on $\DC$). Therefore,
$$
\log F_t(x) = \left \langle x, w_t  \right \rangle + c_t
$$
where $c_t$ is some It\^o process and $w_t = B_t + \int_0^t a_s ds$. The above display and the fact that $\nu_t$ is a probability measure, implies that
\begin{equation}\label{eq:tilts}
\nu_t = \tau_{w_t} \nu,
\end{equation}
and therefore also 
$$
a_t = \int_{\DC} x d \nu_t(x) = \int_{\DC} x d \tau_{w_t} \nu(x) = a_\nu(w_t).
$$ 
The process $w_t$ thus satisfies the equation
\begin{equation}\label{eq:dvt}
w_0 = 0, ~~ d w_t = d B_t + a_\nu(w_t) dt.
\end{equation}
The above equation gives an alternative construction for the process defined in \eqref{eq:localization}. Due to the Markov property of the measure-valued process $\nu_t$, the above discussion leads to the following result.
\begin{proposition} \label{prop:alt}
Given a measure $\nu$ and a vector $v \in \RR^n$, consider the process defined by the equation
$$
u_0 = v, ~~~ d u_t = d B_t + a_\nu(u_t) dt.
$$
Then $X = \lim_{t \to \infty} a_\nu(u_t)$ exists and is a point in $\DC$ almost surely, and has the law $\tau_{v} \nu$. 
\end{proposition}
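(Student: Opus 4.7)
The plan is to reduce the claim to the $v=0$ case which is handled by the discussion leading to Proposition \ref{prop:basicprops}. The reduction rests on a single algebraic observation: exponential tilts compose additively, i.e., $\tau_w(\tau_v \nu) = \tau_{w+v}\nu$ for all $w,v \in \RR^n$. Writing out both sides and comparing normalizations against $\int e^{\langle w+v, x\rangle}\,d\nu(x)$ verifies this, and differentiating yields the key identity
$$
a_\nu(w+v) = a_{\tau_v\nu}(w), \qquad \forall w,v \in \RR^n.
$$

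Given the identity, I would introduce the shifted process $\tilde u_t := u_t - v$. It starts at $\tilde u_0 = 0$, and substituting the identity into the SDE for $u_t$ gives
$$
d \tilde u_t = d B_t + a_\nu(\tilde u_t + v)\,dt = d B_t + a_{\tau_v \nu}(\tilde u_t)\,dt.
$$
This is precisely the equation \eqref{eq:dvt} with the underlying measure $\nu$ replaced by $\tau_v\nu$, started at the origin. Thus $\tilde u_t$ is, by construction, the drift process of the stochastic localization scheme of Section \ref{sec:stochastic} applied to the measure $\tau_v\nu$. In particular, running the derivation culminating in \eqref{eq:tilts} with $\tau_v \nu$ in place of $\nu$, the associated measure-valued process $\tilde \nu_t := \tau_{\tilde u_t}(\tau_v \nu)$ is a martingale that converges weakly to $\delta_{a_{\tau_v\nu}(\tilde u_\infty)}$, and Proposition \ref{prop:basicprops} (applied to $\tau_v\nu$) guarantees that $\lim_{t\to\infty} a_{\tau_v\nu}(\tilde u_t)$ exists almost surely, takes values in $\DC$, and is distributed as $\tau_v\nu$.

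To conclude, I would use the identity once more to note that $a_\nu(u_t) = a_{\tau_v\nu}(\tilde u_t)$ for every $t\geq 0$, so $X = \lim_{t\to\infty} a_\nu(u_t)$ inherits the same three properties, which is exactly the claim. There is no real obstacle here: existence and uniqueness of the shifted SDE follow from those of \eqref{eq:dvt} for $\tau_v\nu$ (the drift $a_{\tau_v\nu}$ is smooth and bounded on $\RR^n$, as it takes values in $[-1,1]^n$), and all nontrivial content—the a.s.\ convergence of $a_t$ and the identification of the limit law—is imported wholesale from Proposition \ref{prop:basicprops}.
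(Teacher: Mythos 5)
Your proof is correct and is essentially the same argument as the paper's: both rest on the identity $a_{\tau_v\nu}(w) = a_\nu(w+v)$, both identify the shifted process (your $\tilde u_t = u_t - v$; the paper goes the other way, shifting the canonical process by $+v$ and invoking SDE uniqueness to match it with $u_t$), and both conclude via part 3 of Proposition \ref{prop:basicprops} applied to $\tau_v\nu$.
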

\begin{proof}
Consider the measure $\tilde \nu = \tau_{v} \nu$ and let $w_t$ be the process constructed as above with $\nu$ replaced by $\tilde \nu$ in equation \eqref{eq:localization}. Observe that by definition 
$$
a_{\tilde \nu}(x) = a_\nu(x + v), ~~ \forall x \in \RR^n.
$$	
In light of \eqref{eq:dvt} and by the uniqueness of the solution to the above SDE, we have that $w_t + v = u_t$ almost surely, for all $t$. The result now follows from part 3 of Proposition \ref{prop:basicprops} and the fact that 
$$
a_t = a_{\tilde \nu}(w_t) = a_\nu(u_t).
$$
\end{proof}

\section{Proof of Theorem \ref{thm:main}}
\subsection{Estimating the variance in terms of the transportation distance between tilts}
Define $M_t = \int \varphi d \nu_t$. Observe that by part 2 of Proposition \ref{prop:basicprops}, $M_t$ is a martingale. We first claim that
\begin{equation}\label{eq:totalvar}
\Var_\nu[\varphi] = \EE [M]_t + \EE \Var_{\nu_t}[\varphi],
\end{equation}
where $[M]_t$ denotes the quadratic variation of $M_t$.

Indeed, by part 3 of Proposition \ref{prop:basicprops}, we have that $M_\infty := \lim_{t \to \infty} M_t$ exists almost surely and has the law $\varphi_\star \nu$. Consequently, $\Var_\nu[\varphi | \FF_t] = \Var[M_\infty | \FF_t]$ almost surely, for all $t$. By It\^o's isometry, we have
$$
\Var_\nu[\varphi] = \EE [M]_t + \EE \Var[M_\infty | \FF_t] = \EE [M]_t + \EE \Var_{\nu_t}[\varphi].
$$
We will carry on by bounding each of the terms on the right hand side separately, for a suitable chosen value of $t$. We begin with the first term, for which we calculate
$$
d M_t = d \int_{\DC} \varphi(x) \nu_t(x) = \int_{\DC} \varphi(x) d F_t(x) \nu(x) = \int_{\DC} \varphi(x) \langle x-a_t, d B_t \rangle \nu_t(x).
$$
Therefore, we can estimate
\begin{align}
d [M]_t & = \left | \int_{\DC} \varphi(x)  (x-a_t) d \nu_t  \right |^2 dt \nonumber \\
& \leq \sup_{|\theta|=1, \tilde \varphi \in \mathrm{Lip}(\DC)}  \left | \int_{\DC} \tilde \varphi(x)  \langle x-a_t, \theta \rangle d \nu_t \right |^2 dt \nonumber \\
& = \sup_{|\theta|=1, \tilde \varphi \in \mathrm{Lip}(\DC)}  \left | \int_{\DC} \left (\tilde \varphi(x) -\int_{\DC} \tilde \varphi d \nu_t \right )  \langle x-a_t, \theta \rangle d \nu_t \right |^2 dt \nonumber \\
& = \sup_{|\theta|=1, \tilde \varphi \in \mathrm{Lip}(\DC)}  \left | \int_{\DC} \left (\tilde \varphi(x) -\int_{\DC} \tilde \varphi d \nu_t \right )  \limsup_{\eps \to 0+} \frac{1}{\eps}   \left (\exp \left (\langle x-a_t, \eps \theta \rangle \right ) - 1 \right ) d \nu_t \right |^2 dt \nonumber \\
& = \sup_{|\theta|=1, \tilde \varphi \in \mathrm{Lip}(\DC)}  \left | \limsup_{\eps \to 0+} \frac{1}{\eps} \int_{\DC} \left (\tilde \varphi(x) -\int_{\DC} \tilde \varphi d \nu_t \right ) \exp \left (\langle x, \eps \theta \rangle \right ) d \nu_t \right |^2 dt \nonumber \\
& = \sup_{|\theta|=1, \tilde \varphi \in \mathrm{Lip}(\DC)}  \left | \limsup_{\eps \to 0+} \frac{1}{\eps} Z_{\nu_t} (\eps \theta) \int_{\DC} \left (\tilde \varphi(x) - \int_{\DC} \tilde \varphi d \nu_t \right ) d \tau_{\eps \theta} \nu_t \right |^2 dt \nonumber \\
& = \sup_{|\theta|=1, \tilde \varphi \in \mathrm{Lip}(\DC)}  \left | Z_{\nu_t} (0) \limsup_{\eps \to 0+} \frac{1}{\eps} \left (\int_{\DC} \tilde \varphi d \tau_{\eps \theta} \nu_t - \int_{\DC} \tilde \varphi d \nu_t \right )  \right |^2 dt \nonumber \\ 
& = \sup_{|\theta|=1} \left |\limsup_{\eps \to 0+} \frac{1}{\eps} \mathrm{W}_1 ( \nu, \tau_{\eps \theta} \nu ) \right |^2 dt, \label{eq:wass}
\end{align}
where for two measures $\nu,\tilde \nu$, we define
$$
\mathrm{W}_1(\nu,\tilde \nu) = \sup_{\tilde \varphi \in \mathrm{Lip}(\DC)} \Bigl |\EE_{\nu}[\tilde \varphi] - \EE_{\tilde \nu}[\tilde \varphi] \Bigr |
$$
known as the Wasserstein transportation distance between $\nu$ and $\tilde \nu$. \\

Towards bounding the second term of the right hand side of \eqref{eq:totalvar}, define $A_t = \COV(\nu_t)$.
\begin{fact} \label{fact:smalltail}
One has,
$$
Var_{\nu_t}[\varphi] \leq n \mathrm{Tr}(A_t).
$$
\end{fact}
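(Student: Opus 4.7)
The plan is to use the standard identity expressing variance in terms of two independent samples, then apply the Lipschitz assumption and Cauchy--Schwarz to reduce the $\ell_1$ Hamming metric to the $\ell_2$ metric, where the trace of the covariance naturally appears.

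Concretely, let $X,Y$ be independent samples from $\nu_t$. Then
$$
\Var_{\nu_t}[\varphi] = \frac{1}{2} \EE\bigl[(\varphi(X) - \varphi(Y))^2\bigr].
$$
Since $\varphi$ is $1$-Hamming-Lipschitz, $(\varphi(X)-\varphi(Y))^2 \leq \|X-Y\|_1^2$. By Cauchy--Schwarz applied to the vector with entries $|X_i - Y_i|$, we get $\|X-Y\|_1^2 \leq n \|X-Y\|_2^2$. Combining,
$$
\Var_{\nu_t}[\varphi] \leq \frac{n}{2} \EE\|X-Y\|_2^2 = n \sum_{i=1}^n \Var_{\nu_t}(X_i) = n \, \mathrm{Tr}(A_t),
$$
where in the last equality I used that for independent copies $X_i, Y_i$ one has $\frac{1}{2} \EE[(X_i - Y_i)^2] = \Var_{\nu_t}(X_i)$, and that the diagonal entries of $A_t = \COV(\nu_t)$ are precisely these marginal variances.

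There is no real obstacle here; the only tiny thing to double-check is the loss factor of $n$ in the Cauchy--Schwarz step, which is sharp (attained, e.g., by $\varphi(x) = \sum_i x_i$ against a measure concentrated on two antipodal points). This is consistent with the trivial bound $\Var_\nu[\varphi] \leq n^2$ mentioned in the introduction, obtained when $\mathrm{Tr}(A_t) = n$.
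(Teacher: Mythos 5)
Your proof is correct, but it takes a genuinely different route from the paper's. The paper first passes to the multi-linear extension of $\varphi$, checks that it satisfies $|\partial_i \varphi| \leq 1$ on $[-1,1]^n$ (hence is $\sqrt{n}$-Lipschitz in Euclidean norm), and then uses that the mean minimizes mean square error to write $\Var_{\nu_t}[\varphi] \leq \EE[(\varphi(X) - \varphi(\EE X))^2] \leq n \EE|X-\EE X|^2$. This requires evaluating $\varphi$ at the point $\EE X \in [-1,1]^n$, which is why the extension is needed. You instead use the two-sample identity $\Var[\varphi] = \tfrac{1}{2}\EE[(\varphi(X)-\varphi(Y))^2]$ with $X,Y$ i.i.d.\ from $\nu_t$, apply the Hamming Lipschitz bound directly on $\DC$, and then lose the factor $n$ via Cauchy--Schwarz ($\|v\|_1^2 \leq n\|v\|_2^2$); the cross term then collapses to $\mathrm{Tr}(A_t)$ because $\tfrac{1}{2}\EE[(X_i-Y_i)^2] = \Var(X_i)$. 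Your argument is more elementary — it never leaves the discrete cube and avoids the extension machinery — while the paper's version makes visible the role of the harmonic interpolation, which is thematic in the rest of the paper. Both yield exactly the same constant.
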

\begin{proof}
It is easily checked that $\varphi$ is $1$-Hamming-Lipschitz then its multi-linear extension satisfies $|\partial_i \varphi(x)| \leq 1$ for all $x \in [-1,1]^n$ and $i \in [n]$. Therefore, its multi-linear extension is $\sqrt{n}$-Lipschitz with respect to the Euclidean distance, hence
$$
\left | \varphi(x) - \varphi(y) \right | \leq \sqrt{n} |x-y|, ~~ \forall x,y \in [-1,1]^n.
$$
So, if $X \sim \nu_t$ then,
$$
Var_{\nu_t}[\varphi] \leq \EE [ \left ( \varphi(X) - \varphi(\EE[X]))^2 \right ] \leq n \EE \left [|X - \EE[X]|^2\right ] = n \mathrm{Tr}(A_t).
$$
\end{proof}

\begin{lemma}
For all $t,r \geq 0$, we have almost surely,
\begin{equation}\label{eq:Trdecreasing}
\EE[\mathrm{Tr}(A_{t+r}) | \FF_r] \leq n e^{-t/8}.
\end{equation}
\end{lemma}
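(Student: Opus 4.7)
The plan is to establish pointwise exponential decay of $\EE[\sqrt{A_{t,ii}}\mid\FF_r]$ for each coordinate $i$, then sum over $i$. This bound is a general property of the stochastic localization process: it does not rely on semi-log-concavity.

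First I would note that, since $X\sim \nu_t$ takes values in $\DC$, one has $A_{t,ii}=1-a_{t,i}^2\in[0,1]$. Using the identity $da_t=A_t\,dB_t$ from \eqref{eq:dat}, It\^o's formula gives
\[
 dA_{t,ii} \;=\; -2a_{t,i}\langle A_t e_i,\,dB_t\rangle \;-\; (A_t^2)_{ii}\,dt, \qquad d[A_{t,ii}]_t \;=\; 4a_{t,i}^2 (A_t^2)_{ii}\,dt.
\]
Applying It\^o to the square root $\phi(x)=\sqrt{x}$ then yields
\[
 d\sqrt{A_{t,ii}} \;=\; -\frac{a_{t,i}}{\sqrt{A_{t,ii}}}\langle A_t e_i,\,dB_t\rangle \;-\; \frac{(A_t^2)_{ii}}{2A_{t,ii}^{3/2}}\bigl(A_{t,ii}+a_{t,i}^2\bigr)\,dt.
\]

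The key observation is the algebraic identity $A_{t,ii}+a_{t,i}^2=(1-a_{t,i}^2)+a_{t,i}^2=1$, which collapses the drift to $-\,(A_t^2)_{ii}/(2A_{t,ii}^{3/2})\,dt$. Using the elementary PSD bound $(A_t^2)_{ii}=\|A_t e_i\|^2\geq A_{t,ii}^2$, the drift is at most $-\tfrac12\sqrt{A_{t,ii}}\,dt$. Taking conditional expectation given $\FF_r$ eliminates the martingale part, and Gr\"onwall's inequality together with $A_{r,ii}\leq 1$ delivers
\[
 \EE\!\left[\sqrt{A_{t+r,ii}}\,\big|\,\FF_r\right] \;\leq\; \sqrt{A_{r,ii}}\,e^{-t/2} \;\leq\; e^{-t/2}.
\]
Since $A_{t+r,ii}\in[0,1]$ implies $A_{t+r,ii}\leq\sqrt{A_{t+r,ii}}$, summing over $i\in[n]$ gives $\EE[\mathrm{Tr}(A_{t+r})\mid\FF_r]\leq ne^{-t/2}$, which is substantially stronger than the stated $n e^{-t/8}$.

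The only real subtlety is recognizing that the \emph{half}-power of $A_{t,ii}$ is the correct monotone quantity. A naive attempt, based on $\frac{d}{dt}\EE A_{t,ii}=-\EE(A_t^2)_{ii}\leq-\EE A_{t,ii}^2$ and Jensen, yields only the polynomial bound $\EE A_{t,ii}=O(1/t)$, because the boundary layer $A_{t,ii}\approx 0$ gives $A_{t,ii}^2\ll A_{t,ii}$. Taking the square root is what converts the It\^o correction term into exactly the right multiple of the drift via the identity $A_{t,ii}+a_{t,i}^2=1$, after which the decay is self-reinforcing and pointwise exponential.
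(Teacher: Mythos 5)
Your proof is correct and takes essentially the same route as the paper: the paper sets $Q_t=a_{t,i}$, $S_t=A_{t,ii}=1-Q_t^2$, applies It\^o to $\sqrt{S_t}$, and uses $d[Q]_t\geq S_t^2\,dt$; your $A_{t,ii}+a_{t,i}^2=1$ collapse and the PSD bound $(A_t^2)_{ii}\geq A_{t,ii}^2$ are exactly the paper's $S_t+Q_t^2=1$ and \eqref{eq:Qtfast} in different clothing. One small point worth noting: when you carry the It\^o correction term through carefully (as you did) you in fact get the sharper drift $-\tfrac12\sqrt{A_{t,ii}}\,dt$ and hence $e^{-t/2}$, whereas the paper's displayed computation has a slip (the second-order term is written with $d[Q]_t$ where $d[S]_t=4Q_t^2\,d[Q]_t$ should appear) and it settles for the weaker $e^{-t/8}$, which is all that is needed downstream; your version is the tidied-up calculation.
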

\begin{proof}
Fix $i \in [n]$, and define 
$$
S_t := (A_t)_{i,i}, ~~ Q_t = \langle a_t, e_i \rangle.
$$ 
By part 3 of Proposition \ref{prop:basicprops}, we have that $a_\infty | \FF_t$ has the law $\nu_t$, meaning that $Q_\infty := \lim_{t \to \infty} Q_t$ exists almost surely, and that
$$
S_t = \Var[Q_\infty | \FF_t].
$$
Recall that $a_t$ is a martingale and thus so is $Q_t$ and
$$
S_t = \Var[Q_\infty | \FF_t] = \EE[Q_\infty^2 | \FF_t] - Q_t^2 = 1 - Q_t^2.
$$
Equation \eqref{eq:atfast} can be written
\begin{equation}\label{eq:Qtfast}
d [Q]_t \geq S_t^2 dt.
\end{equation}
By It\^o's formula,
$$
d S_t = - 2 Q_t d Q_t - d [Q]_t,
$$
and
$$
d \sqrt{S_t} = \frac{ - 2 Q_t d Q_t - d [Q]_t }{2 \sqrt{S_t}} - \frac{1}{8} \frac{d [Q]_t}{ S_t^{3/2} } \stackrel{\eqref{eq:Qtfast}}{\leq } \frac{ - Q_t d Q_t }{ \sqrt{S_t}} - \frac{\sqrt{S_t}}{8} dt.
$$
Since the first term on the right hand side is a martingale, we have almost surely, for all $s,t \geq 0$,
$$
\frac{d}{dt} \EE[ \sqrt{S_{t+r}} | \FF_r ] \leq - \frac{1}{8} \EE[ \sqrt{S_{t+r}} | \FF_r ].
$$
By integrating (using Growall's inequality), we finally get
$$
\EE [ S_{t+r} | \FF_r ] \leq \EE [ \sqrt{S_{t+r}} | \FF_r ] \leq \sqrt{S_r} e^{-t/8} \leq e^{-t/8}
$$
(where we used the fact that $S_t \leq 1$ almost surely for all $t$). The proof is completed by summing over coordinates.
\end{proof}
Finally, combining equations \eqref{eq:totalvar}, \eqref{eq:wass}, \eqref{eq:Trdecreasing} and Fact \ref{fact:smalltail}, we have for all $T>0$, 
\begin{align}
\Var[\varphi] & \leq \EE \int_0^T  \sup_{|\theta|=1}  \left | \limsup_{\eps \to 0+} \frac{1}{\eps} \mathrm{W}_1(\nu_t, \tau_{\eps \theta} \nu_t) \right |^2 dt + n^2 e^{-T/8} \nonumber \\
& \stackrel{\eqref{eq:tilts}}{=} \EE \int_0^T  \sup_{|\theta|=1}  \left | \limsup_{\eps \to 0+} \frac{1}{\eps} \mathrm{W}_1(\tau_{w_t} \nu, \tau_{w_t + \eps \theta} \nu) \right |^2 dt + n^2 e^{-T/8}.  \label{eq:finalvarbound}
\end{align}

In light of the above bound, the proof boils down to the following estimate on the transportation distance between two close tilts.
\begin{proposition} \label{prop:transport}
Let $\nu$ be a $\beta$-semi-log-concave measure on $\DC$. Let $v \in \RR^n$ and let $\theta \in \mathbb{S}^{n-1}$. Then, for all $0 < \eps < 0.1$, we have
$$
\mathrm{W}_1(\tau_v \nu, \tau_{v + \eps \theta} \nu) \leq 4 \eps \beta n^{1-1/(32 \beta)}.
$$
\end{proposition}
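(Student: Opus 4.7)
The plan is to apply Proposition~\ref{prop:alt} to construct an explicit coupling of $\tau_v \nu$ and $\tau_{v+\eps\theta}\nu$ via a pair of stochastic localization processes driven by the same Brownian motion, and then to estimate the resulting transport cost coordinate-by-coordinate. Since $\tau_v \nu$ is itself $\beta$-semi-log-concave, I would first reduce to the case $v=0$. Let $B_t$ be a Brownian motion and define $u^{(1)}_t, u^{(2)}_t$ by $d u^{(k)}_t = dB_t + a_\nu(u^{(k)}_t)\,dt$ with $u^{(1)}_0 = 0$ and $u^{(2)}_0 = \eps\theta$. By Proposition~\ref{prop:alt}, the a.s.\ limits $X^{(k)} = \lim_{t\to\infty} a_\nu(u^{(k)}_t) \in \DC$ have laws $\nu$ and $\tau_{\eps\theta}\nu$ respectively, so by Kantorovich--Rubinstein,
$$
W_1(\nu,\tau_{\eps\theta}\nu) \le \EE \|X^{(1)} - X^{(2)}\|_1 = 2 \sum_{i=1}^n \PP(X^{(1)}_i \ne X^{(2)}_i).
$$

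Next I would quantify how fast the two paths drift apart. Writing $D_t = u^{(2)}_t - u^{(1)}_t$ and $b^{(k)}_t = a_\nu(u^{(k)}_t)$, the bound $\nabla a_\nu \preceq \beta \Id$ from \eqref{eq:nablaa} together with a Gronwall estimate on $\tfrac{d}{dt}|D_t|^2$ gives the pathwise bounds $|D_t|_2 \le \eps e^{\beta t}$ and $|b^{(2)}_t - b^{(1)}_t|_2 \le \beta\eps e^{\beta t}$. At a time $T$ to be chosen, I would partition $[n]$ into three classes: \emph{well-decided} coordinates, where both $(A_{\nu}(u^{(k)}_T))_{ii} \le 3/4$ and $\mathrm{sgn}((b^{(1)}_T)_i) = \mathrm{sgn}((b^{(2)}_T)_i)$; \emph{sign-flipped} coordinates, which require $|(b^{(1)}_T)_i - (b^{(2)}_T)_i| \ge 1$ and hence number at most $|b^{(1)}_T - b^{(2)}_T|_1 \le \sqrt{n}\,\beta\eps e^{\beta T}$ pathwise; and \emph{undecided} coordinates, whose expected count is bounded by $C n e^{-T/8}$ via Markov and Lemma~\eqref{eq:Trdecreasing}. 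On well-decided coordinates a union bound gives $\PP(X^{(1)}_i \ne X^{(2)}_i \mid \FF_T) \le \tfrac{1}{2}\bigl((A_\nu(u^{(1)}_T))_{ii} + (A_\nu(u^{(2)}_T))_{ii}\bigr)$, summing to $O(ne^{-T/8})$ again by Lemma~\eqref{eq:Trdecreasing}; on the other two classes the trivial bound $\PP \le 1$ is used.

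Combining these yields $\EE\|X^{(1)} - X^{(2)}\|_1 \le C_1 n e^{-T/8} + C_2 \sqrt{n}\,\beta\eps e^{\beta T}$, and choosing $T \asymp \log n / (4\beta)$ produces $e^{-T/8} = n^{-1/(32\beta)}$ and $e^{\beta T} = n^{1/4}$, yielding the $n$-exponent $1 - 1/(32\beta)$ appearing in the target bound. The main obstacle is that the undecided-coordinate contribution is independent of $\eps$, whereas the claimed bound must be \emph{linear} in $\eps$ in order to supply the sensitivity estimate $\limsup_{\eps\to 0^+} \frac{1}{\eps}W_1 \le 4\beta n^{1-1/(32\beta)}$ used in \eqref{eq:finalvarbound}. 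Closing this gap requires showing that undecided coordinates contribute only $O(\eps)$, rather than $O(1)$, to the expected disagreement. I would attempt this by continuing the coupled SDE past time $T$ and exploiting that $|b^{(2)}_t - b^{(1)}_t|$ remains $O(\eps e^{\beta t})$, so that each undecided coordinate decides in a way that is determined, up to an event of probability $O(\eps)$, by the shared Brownian noise; alternatively by iterating on the stopped tilts $\nu^{(k)}_T$ via the convexity inequality $W_1(\nu,\tau_{\eps\theta}\nu) \le \EE W_1(\nu^{(1)}_T, \nu^{(2)}_T)$; or by a Girsanov change of measure that reinterprets the $\eps$-shift in the initial condition as a small, controllable drift.
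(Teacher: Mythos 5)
Your framework is the right one---coupling two stochastic-localization processes, controlling their drift apart via the contraction bound $\nabla a_\nu \preceq \beta\Id$, and using the exponential decay of $\mathrm{Tr}(A_t)$---and you correctly diagnose the fatal issue: a \emph{synchronous} coupling (both processes driven by the same $B_t$) cannot produce a bound linear in $\eps$, because the two paths never actually merge, so the ``undecided coordinate'' contribution is $O(n e^{-T/8})$ uniformly in $\eps$. However, none of the three remedies you sketch is the one the paper uses, and none of them obviously works. The Girsanov idea in particular is subtle here because what you need is not a change of drift but a way to make the two paths \emph{collide}; and the recursion $W_1(\nu,\tau_{\eps\theta}\nu)\le \EE\, W_1(\nu_T^{(1)},\nu_T^{(2)})$ just reproduces the same problem at a larger $\eps$.

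The paper closes the gap by replacing your synchronous coupling with a \emph{reflection coupling}: set $d u_t = U_t\,dB_t + a_\nu(u_t)\,dt$ with $U_t = \Id - 2\bigl(\tfrac{w_t-u_t}{|w_t-u_t|}\bigr)^{\otimes 2}$, the reflection about $w_t-u_t$, until the first meeting time $\tau$, and $U_t=\Id$ afterward. Then It\^o's formula shows $Y_t:=|u_t-w_t|$ satisfies $dY_t = 2\,dW_t + Z_t\,dt$ with $Z_t\le\beta Y_t$ (the cross term in $d|u_t-w_t|^2$ contributes $4\,dt$, which exactly cancels the It\^o correction for the square root), so $e^{-\beta t}Y_t$ is a supermartingale with an explicit nonpositive drift. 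After a deterministic time change that turns its martingale part into a standard Brownian motion, a reflection-principle estimate gives $\PP(\tau>1)\le\eps\sqrt{\beta}$, which is precisely the $O(\eps)$ bound your accounting is missing. The rest of the argument then has only two terms, both $O(\eps)$: on $\{\tau\le 1\}$ the paths coincide from time $\tau$ on, and the contraction bound gives $\sqrt{n}\,\EE|a_\nu(w_t)-a_\nu(u_t)|\le\beta\eps e^{\beta t}\sqrt{n}$; on $\{\tau>1\}$ (probability $\le\eps\sqrt\beta$) one pays $\sqrt{n\,\mathrm{Tr}(A_\nu(\cdot_t))}\le n e^{-(t-1)/16}$ via the trace-decay lemma. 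Optimizing $t$ gives the claimed exponent. Note also that the paper needs no coordinate-by-coordinate classification (well-decided/sign-flipped/undecided); the discrepancy between $\lim_s a_\nu(w_s)$ and $a_\nu(w_t)$ is handled globally by the conditional second-moment bound $\EE[\,|\cdot|\mid\FF_t\,]\le\sqrt{\mathrm{Tr}\,A_\nu(w_t)}$, which is simpler and tighter than your per-coordinate union bound.

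So: the structure, the $e^{\beta t}$ growth estimate, the use of $\mathrm{Tr}(A_t)\lesssim ne^{-t/8}$, and the identification of the $\eps$-linearity as the crux are all correct; the missing idea is the reflection coupling, which is essential and not recoverable from the ingredients you list.
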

\begin{proof}[Proof of Theorem \ref{thm:main}]
Use equation \eqref{eq:finalvarbound} with $T = 16 \log n$. Invoke the above proposition and attain
$$
\Var[\varphi] \leq 4 T \beta n^{2-1/(16 \beta)} + n^2 e^{-2 \log n} \leq C \beta n^{2-1/(17 \beta)},
$$
for a universal constant $C>0$.
\end{proof}

\subsection{Proof of Proposition \ref{prop:transport}: The stochastic coupling}
Proposition \ref{prop:transport} will be proven via a coupling argument laid out below. Let $v \in \RR^n$ and consider the process
\begin{equation}\label{eq:dvtc}
w_0 = v, ~~~~ d w_t = d B_t + a_\nu(w_t) dt.
\end{equation}
According to Proposition \ref{prop:alt}, we have that $\lim_{t \to \infty} a_\nu(w_t) \sim \tau_{v} \nu$. This gives rise to the following coupling. Let $U_t$ be process adapted to $\FF_t$ such that for all $t$, $U_t$ is an orthogonal matrix. Let $\eps > 0$ and $\theta \in \Sph$, and consider the additional process defined by the equation
\begin{equation}\label{eq:dutc}
u_0 = v + \eps \theta, ~~~ d u_t = U_t d B_t + a_\nu(u_t) dt.
\end{equation}
Similarly to the above, we have $\lim_{t \to \infty} a_\nu(w_t) \sim \tau_{v + \eps \theta} \nu$, and therefore
\begin{align*}
\mathrm{W}_1(\tau_v \nu, \tau_{v + \eps \theta} \nu) & = \sup_{\tilde \varphi \in \mathrm{Lip}(\DC)} \left  |\EE \left [ \tilde \varphi \left ( \lim_{t \to \infty} a_\nu(w_t) \right ) \right ] - \EE \left [ \tilde \varphi \left ( \lim_{t \to \infty} a_\nu(u_t) \right ) \right ] \right  | \nonumber \\
& \leq  \EE \left  [ \left \| \lim_{t \to \infty} a_\nu(w_t) - \lim_{t \to \infty} a_\nu(u_t) \right  \|_1 \right ] \nonumber \\
& \leq \sqrt{n} \EE \left  [ \left | \lim_{t \to \infty} a_\nu(w_t) - \lim_{t \to \infty} a_\nu(u_t) \right  |_2 \right ].
\end{align*}
Consider the stopping time
$$
\tau := \inf\{t; ~~ u_t = w_t\}
$$  
and the event $E := \{\tau \leq 1\}$. By setting $U = \Id$ for all $t \geq \tau$, we get that $a_\nu(w_t) = a_\nu(u_t)$ for all $t \geq \tau$ almost surely, and therefore
$$
E \mbox{ holds } ~~ \Rightarrow ~~ \lim_{t \to \infty} |a_\nu(w_t)- a_\nu(u_t)| = 0.
$$
Moreover, since $a_\nu(w_t)$ is a martingale (as follows from part 2 of Proposition \ref{prop:basicprops}), we have
\begin{align*}
\left . \EE \left [ \left | a_\nu(w_t) - \lim_{s \to \infty} a_\nu(w_s) \right  | \right  | \FF_t \right ] & \leq \sqrt{ \EE \left . \left [ \left  | a_\nu(w_t) - \lim_{s \to \infty} a_\nu(w_s) \right  |^2 \right | \FF_t \right ]} \\
& = \sqrt{\mathrm{Tr}\bigl (\COV(\tau_{w_t} \nu ) \bigr)} = \sqrt{\mathrm{Tr}(A_\nu(w_t))},
\end{align*}
where the first equality uses the fact that $\lim_{s \to \infty} a_\nu(w_s) | \FF_t$ has the law $\tau_{w_t} \nu$, which follows from Proposition \ref{prop:alt}. Combining the above displays and using the triangle inequality, we conclude that for all $t \geq 1$,
\begin{align}
\mathrm{W}_1(\tau_v \nu, \tau_{v + \eps \theta} \nu) & \leq \sqrt{n} \EE \left  [ \mathbf{1}_{E^C} \left | \lim_{t \to \infty} a_\nu(w_t) - \lim_{t \to \infty} a_\nu(u_t) \right  |_2 \right ] \nonumber \\
& \leq \sqrt{n} \EE \Bigl [ | a_\nu(w_t) - a_\nu(u_t))| \Bigr ] + \EE \left [ \mathbf{1}_{E^C} \left (\sqrt{n\mathrm{Tr}(A_\nu(w_t))} + \sqrt{n\mathrm{Tr}(A_\nu(u_t))} \right ) \right ]. \label{eq:combinedbound}
\end{align}
We estimate every term on the right hand side separately, beginning with the first one. According to equation \eqref{eq:nablaa}, we have almost surely,
\begin{equation}\label{eq:contraction}
|a_\nu(u_t) - a_{\nu} (w_t)| \leq \beta |u_t - w_t|.
\end{equation}
Using equation \eqref{eq:dvtc} and \eqref{eq:dutc}, and by It\^o's formula, we have
\begin{align}
d |u_t - w_t|^2 & = 2 \langle u_t - w_t, d u_t - d w_t \rangle + \mathrm{Tr} \left ((U_t - \Id)^2\right ) dt \nonumber \\ 
& = 2 \langle u_t - w_t, (U_t  - \Id) d B_t \rangle + 2 \langle u_t - w_t, a_\nu(u_t) - a_{\nu} (w_t) \rangle dt + \mathrm{Tr} \left ((U_t - \Id)^2\right ) dt. \label{eq:ddist}
\end{align}
Set
$$
W_t = \int_0^t \left \langle \frac{u_t - w_t}{|u_t - w_t|}, d B_t \right \rangle, ~~  \forall t \leq \tau.
$$
Observe that $W_t$ is a standard Wiener process up to the time $\tau$. Finally, for all $t< \tau$, choose 
$$
U_t = \Id - 2 \left (\frac{w_t - u_t}{|w_t - u_t|} \right )^{\otimes 2},
$$
The reflection about the axis spanned by $w_t-u_t$. We have, by definition,
$$
\langle w_t - u_t, (\Id - U_t) d B_t \rangle = 2 |u_t - w_t| d W_t.
$$
Consequently, equation \eqref{eq:ddist} can be written
$$
d |u_t - w_t|^2 = 4 |u_t-w_t| d W_t + 4 dt + S_t dt
$$
where
$$
S_t = 2 \langle u_t - w_t, a_\nu(u_t) - a_{\nu} (w_t) \rangle \stackrel{\eqref{eq:contraction}}{\leq} 2 \beta |u_t - w_t|^2.
$$
Now define $Y_t = |u_t - w_t|$. Invoking It\^o's formula again, we have that up to time $\tau$, one has
\begin{align*}
d Y_t & = \frac{d \left (|u_t - w_t|^2 \right )}{2 |u_t - w_t|} - \frac{d \left [|u_t - w_t|^2 \right ]_t}{8  |u_t - w_t|^3} \\
& = 2 d W_t + \frac{S_t + 4}{2 |u_t - w_t|} dt - \frac{16 |u_t - w_t|^2 }{8 |u_t - w_t|^3} dt \\
& = 2 d W_t + \frac{S_t}{2 |u_t - w_t|} dt.
\end{align*}
Combining the two last displays, we learn that there exists an adapted process $Z_t$ such that
\begin{equation}\label{eq:dYt}
d Y_t = 2 d W_t + Z_t dt, ~~~ \forall t < \tau
\end{equation}
such that $Z_t \leq \beta Y_t$ almost surely for all $t \leq \tau$. A final application of It\^o's formula gives,
\begin{equation}\label{eq:ItoEy}
d (e^{-\beta t} Y_t) = e^{-\beta t} d Y_t - \beta e^{-\beta t} Y_t dt = 2 e^{- \beta t} d W_t + e^{- \beta t} (Z_t - \beta Y_t) dt.
\end{equation}
Therefore, $e^{-\beta t} Y_t$ is a supermartingale, and by the optional stopping theorem, we have
$$
\EE \left [Y_{t \wedge \tau} \right ] \leq e^{\beta t} \EE \left [e^{-\beta (t \wedge \tau)} Y_{t \wedge \tau} \right ] = e^{\beta t} Y_0 = e^{\beta t} \eps.
$$
It follows that
\begin{equation}\label{eq:firstterm}
\EE \Bigl [ | a_\nu(w_t) - a_\nu(u_t))|\Bigr ] \stackrel{\eqref{eq:contraction} }{\leq} \beta \EE \Bigl [ | w_t - u_t |\Bigr ] \leq \beta e^{\beta t} \eps.
\end{equation}
It remains to bound the second summand in the right hand side of \eqref{eq:combinedbound}. To this end, recall that $E$ is $\FF_1$-measurable, so we have for all $t \geq 1$,
\begin{equation}\label{eq:condtime1}
\EE \left [ \mathbf{1}_{E^C} \sqrt{n\mathrm{Tr}(A_\nu(w_t))} \right ] \leq \PP \left ( E^C \right ) \sup_{v \in \RR^n} \EE \left . \left [ \sqrt{n\mathrm{Tr}(A_\nu(w_t))} \right | w_1 = v \right ]. 
\end{equation}
To give an upper bound for the right hand side, we will first need the following lemma.
\begin{lemma}
Let $\eps > 0$. Let $W_t$ be a standard Brownian motion and let $Y_t, Z_t$ be adapted to $W_t$, which satisfy
$$
X_t = \eps + W_t + \int_0^t Z_s ds
$$
and such that $Z_t \leq 0$, almost surely for all $t$. Let $\tau = \inf\{t; ~ X_t = 0 \}$. One has
$$
\PP(\tau \geq s) \leq \frac{\eps}{\sqrt{s}}, ~~ \forall s > 0.
$$
\end{lemma}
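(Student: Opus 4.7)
The plan is to compare $X_t$ with the drift-free process $\tilde X_t := \eps + W_t$ and then invoke the reflection principle for standard Brownian motion. Since $Z_s \leq 0$ almost surely, the pointwise inequality
\[
X_t = \eps + W_t + \int_0^t Z_s\, ds \leq \eps + W_t = \tilde X_t
\]
holds for every $t \geq 0$. Both processes are continuous and start at $\eps > 0$. On the event $\{\tau \geq s\}$ we have $X_t > 0$ for every $t < s$, hence also $\tilde X_t \geq X_t > 0$ for every $t < s$, so the first hitting time $\tilde \tau := \inf\{t : \tilde X_t = 0\}$ of zero by $\tilde X$ satisfies $\tilde\tau \geq s$. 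This gives the stochastic domination $\PP(\tau \geq s) \leq \PP(\tilde\tau \geq s)$, which reduces the problem to a statement about pure Brownian motion.

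Next I would compute $\PP(\tilde\tau \geq s)$ explicitly. Writing $\tilde X_t = \eps + W_t$, the event $\{\tilde\tau \geq s\}$ coincides with $\{\min_{0 \leq t \leq s} W_t > -\eps\}$. By the reflection principle, $\PP(\min_{t \leq s} W_t \leq -\eps) = 2\PP(W_s \leq -\eps)$, and symmetry of $W_s$ then yields
\[
\PP(\tilde\tau \geq s) = 1 - 2\PP(W_s \leq -\eps) = \PP(|W_s| \leq \eps).
\]
Bounding the density of $W_s$ uniformly by its maximum value $(2\pi s)^{-1/2}$ gives
\[
\PP(|W_s| \leq \eps) \leq \frac{2 \eps}{\sqrt{2 \pi s}} = \eps \sqrt{\tfrac{2}{\pi s}} \leq \frac{\eps}{\sqrt{s}},
\]
and combining with the domination above completes the proof.

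There is no serious obstacle here: once one realizes that the assumption $Z_t \leq 0$ is used solely to push $X_t$ down below an unperturbed Brownian motion started at $\eps$, the rest is the textbook Gaussian hitting-time estimate. The only mildly subtle point is that the comparison argument goes via the forward implication ``$X_t > 0$ on $[0,s) \Rightarrow \tilde X_t > 0$ on $[0,s)$,'' which uses merely the continuity of both paths and the pointwise inequality, not any pathwise uniqueness or monotonicity of the SDE.
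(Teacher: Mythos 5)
Your proof is correct and follows essentially the same route as the paper's: use $Z_t\le 0$ to dominate $X_t$ pointwise by the driftless process $\eps+W_t$, invoke the reflection principle to identify $\PP(\tilde\tau\geq s)$ with a Gaussian probability, and bound it by the uniform density estimate $2\eps/\sqrt{2\pi s}\le \eps/\sqrt s$. (Incidentally, the paper's displayed line $\PP\left(\min_{t\in[0,s]}X_t\le-\eps\right)$ appears to be a typo for $\PP\left(\min_{t\in[0,s]}X_t\le 0\right)$; your version phrases the comparison cleanly via $\{\tau\ge s\}\subset\{\tilde\tau\ge s\}$.)
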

\begin{proof}
By the reflection principle (see \cite[Theorem 2.19]{PM-BM}), we have 
\begin{align*}
\PP \left ( \min_{t \in [0,s] } X_t \leq - \eps \right ) & \geq \PP \left ( \min_{t \in [0,s] } |W_t + \eps| = 0 \right ) \\
 & = 2 \PP( W_s \leq - \eps ) \\
& = 1 - 2 (\PP( W_s \in [0, \eps] )  \\
& \geq 1 - \frac{2 \eps}{\sqrt{2 \pi s}}.
\end{align*}
\end{proof}
Let $t \to \TTT(t)$ be the unique increasing function satisfying
$$
4 \int_0^{\TTT(t)} e^{-2 \beta s} ds = t, ~~~ \forall 0 \leq t \leq 4 \int_0^{\infty} e^{-2 \beta s} ds.
$$
According to \eqref{eq:ItoEy}, 
$$
[e^{-\beta t} Y_t]_{\TTT(s)} = 4 \int_0^{\TTT(s)} e^{-2 \beta t} dt = s
$$
Since, as seen above, the process $e^{-\beta t} Y_t$ is a super-martingale, by applying a change of time we have that the process $t \to e^{-\beta \TTT(t)} Y_{\TTT(t)}$ is also a super-martingale (with respect to the filtration $\FF_{\TTT(t)}$). We may invoke the above lemma on this process and use the fact that $4 \int_0^1 e^{-2 \beta s} ds \geq \frac{1}{\beta}$, to attain
$$
\PP \left ( E^C \right ) = \PP(\tau \geq 1) \leq \eps \sqrt{\beta}.
$$
Finally, that by Equation \eqref{eq:Trdecreasing} we have for all $v \in \RR^n$ that
$$
\EE \left . \left [ \sqrt{n\mathrm{Tr}(A_\nu(w_t))} \right | w_1 = v \right ] \leq n e^{-(t-1)/16}.
$$
Combining the last two displays with equation \eqref{eq:condtime1}, we have
$$
\EE \left [ \mathbf{1}_{E^C} \sqrt{n\mathrm{Tr}(A_\nu(w_t))} \right ] \leq \eps \sqrt{\beta} n e^{-(t-1)/16}.
$$
By a similar argument, the same bound holds with $w_t$ replaced by $u_t$. Together with equations \eqref{eq:combinedbound} and \eqref{eq:firstterm} this gives
$$
\mathrm{W}_1(\tau_v \nu, \tau_{v + \eps \theta} \nu) \leq \eps \left (\beta \sqrt{n} e^{\beta t} + 2 \sqrt{\beta} n e^{-(t-1)/16} \right ), ~~~ \forall t \geq 1.
$$
Choosing $t = \frac{\log(2n)}{2 \beta + \frac{1}{8}}$ finally gives
$$
\mathrm{W}_1(\tau_v \nu, \tau_{v + \eps \theta} \nu) \leq 4 \eps \beta n^{1-1/(32 \beta)},
$$
completing the proof of Proposition \ref{prop:transport}.
\section{Proof of Fact \ref{fact:harmonic}} \label{sec:harmonic}
Let $\rho:[-1,1]^n \to \RR$ be the harmonic extension of $\frac{d \nu}{d \mu}$. A calculation gives
and write $g(x):=\log \rho (\tanh(x))$, 
\begin{align*}
g(x) & = \log \int_{\DC} \prod_{i \in [n]} \left (1 + \tanh(x_i) \right ) d \nu(x) \\
& = - \sum_{i \in [n]} \log \cosh(x_i) + \mathcal{L}[\nu](x).
\end{align*}
Therefore,
$$\COV(\tau_x\nu)=\nabla^2\mathcal{L}[\nu](x) = \nabla^2g(x)+\mathrm{diag}\left(\frac{1}{\cosh^2(x)}\right).$$
On the other hand, a direct calculate gives
$$\nabla^2_{i,j}g(x)=\frac{[\nabla^2_{i,j}\log \rho](\tanh(x))}{\cosh^2(x_i)\cosh^2(x_j)}-
\delta_{i,j}\frac{2\tanh(x_i)}{\cosh^2(x_i)}[\nabla_i\log \rho](\tanh(x)),$$
thus we have
$$\operatorname{Cov}_{i,j}(\tau_{x}\nu)=
\frac{[\nabla^2_{i,j}\log \rho](\tanh(x))}{\cosh^2(x_i)\cosh^2(x_j)}
-\delta_{i,j}\left(\frac{2\tanh(x_i)}{\cosh^2(x_i)}[\nabla_i\log \rho](\tanh(x))-\frac{1}{\cosh^2(x_i)}\right).$$
Observe that
$$\frac{1}{x_i-1}\le\nabla_i\log \rho(x)\le\frac{1}{x_i+1},$$
and therefore
$$\left\vert\frac{\nabla_i\log \rho(\tanh(x))}{\cosh^2(x_i)}\right\vert\le 2.$$
Finally, since $1/\cosh(x)\le 1$ and by assumption $\nabla^2\log \rho \preceq\beta\Id$, we obtain
$$\COV (\tau_{x}[\nu])\preceq(\beta+3)\Id.$$

\section{Proof of Theorem \ref{thm:entropy} }
Fix a measure $\nu$ on $\DC$ and consider the process $\nu_t$ constructed in Section \ref{sec:stochastic}. An application of \cite[Lemma 6]{Eldan-taming} gives
\begin{equation}\label{eq:ent1}
\mathcal{H}(\nu) = \frac{1}{2} \EE \left [\int_0^\infty \mathrm{Tr} \left (\COV(\nu_t) \right ) dt \right ].
\end{equation}
Define $h:\DC \to \RR$ by
$$
h(x) = - \sum_{i\in [n]} \frac{1+x_i}{2} \log \frac{1+x_i}{2} + \frac{1-x_i}{2} \log \frac{1-x_i}{2}.
$$
It is easily verified that 
\begin{equation}\label{eq:prodent}
\tilde{\mathcal{H}}(\nu) = h \left ( \int x d \nu(x) \right ) = h \left ( a_0 \right ).
\end{equation}
Recall (equation \eqref{eq:dat}) that
$$
d a_t = \COV(\nu_t) d B_t.
$$
Thus, by It\^o's formula,
$$
d h(a_t) = \langle \nabla h(a_t), d a_t \rangle + \frac{1}{2} \mathrm{Tr} \left ( \COV(\nu_t) \nabla^2 h(a_t) \COV(\nu_t)\right ) dt.
$$
A calculation gives
$$
\nabla^2 h(x) = - \mathrm{diag} \left ( \frac{1}{1-x_1^2}, \dots, \frac{1}{1-x_n^2} \right ).
$$
Observe also that since $\nu$ is supported on $\DC$,
$$
\mathrm{diag} \left ( \COV(\nu_t) \right ) = \Id - \mathrm{diag}(a_t)^2.
$$
Combining the last displays gives
\begin{equation}\label{eq:dht}
d h(a_t) = \frac{1}{2} \mathrm{Tr} \Bigl ( \COV(\nu_t) \left (\mathrm{diag} \left ( \COV(\nu_t) \right ) \right )^{-1} \COV(\nu_t)\Bigr ) dt + \mbox{martingale}.
\end{equation}
Using \eqref{eq:momentgenerating}, the condition \eqref{eq:semilc2} implies that, almost surely for all $t$, $\COV(\nu_t) \preceq \beta \mathrm{diag} \left ( \COV(\nu_t) \right )$ which yields
\begin{equation}\label{eq:dominate}
\mathrm{Tr} \Bigl ( \COV(\nu_t) \left (\mathrm{diag} \left ( \COV(\nu_t) \right ) \right )^{-1} \COV(\nu_t)\Bigr ) \leq \beta \mathrm{Tr}(\COV(\nu_t)).
\end{equation}
Combining the above finally gives
\begin{align*}
\tilde{\mathcal{H}}(\nu) & \stackrel{\eqref{eq:prodent}}{=} h(a_0) \\
& = h(a_0) - \EE \left [\lim_{t \to \infty} h(a_\infty) \right ] \\
& \stackrel{\eqref{eq:dht}}{=} \frac{1}{2} \EE \left [\int_0^\infty \mathrm{Tr} \Bigl ( \COV(\nu_t) \left (\mathrm{diag} \left ( \COV(\nu_t) \right ) \right )^{-1} \COV(\nu_t)\Bigr ) dt \right ] \\
& \stackrel{\eqref{eq:dominate}}{\leq} \frac{1}{2} \beta \EE \left [\int_0^\infty \mathrm{Tr} \Bigl ( \COV(\nu_t) \Bigr ) dt \right ] \\
& \stackrel{\eqref{eq:ent1}}{=} \beta \mathcal{H}(\nu).
\end{align*}
\bibliographystyle{plain}
\bibliography{bib}

\end{document}